\newtheorem{lemma}{Lemma}[section]
\newtheorem{theorem}[lemma]{Theorem}
\newtheorem{proposition}[lemma]{Proposition}
\newtheorem{question}[lemma]{Question}
\newcommand{\ad}{{\rm ad}}
\newcommand{\F}{{\mathbb F}}
\newcommand{\Z}{{\mathbb Z}}
\newcommand{\Aut}{{\rm Aut}}
\newcommand\chr{\mathrm{char}}
\newcommand{\al}{\alpha}
\newcommand{\bt}{\beta}
\newcommand{\dl}{\delta}
\newcommand{\lm}{\lambda}
\newcommand{\sg}{\sigma}
\newcommand{\om}{\omega}
\newcommand{\Gm}{\Gamma}
\newcommand{\M}{\mathcal M}
\newcommand{\Mab}{\mathcal M(\alpha, \beta)}
\title{An infinite-dimensional $2$-generated primitive axial algebra of Monster type}
\author{Clara Franchi, Mario Mainardis and Sergey Shpectorov}
\address{Dipartimento di Matematica e Fisica,
Universit\`a Cattolica del Sacro Cuore,
Via Musei 41,
I-25121 Brescia, Italy}
\email{clara.franchi@unicatt.it}
\address{Dipartimento di Scienze Matematiche, Informatiche e Fisiche, 
Universit\`a degli Studi di Udine, via delle Scienze 206,
I-33100 Udine, Italy}
\email{mario.mainardis@uniud.it}
\address{School of Mathematics,
University of Birmingham, 
Watson Building, Edgbaston,
Birmingham, B15 2TT, UK}
\email{s.shpectorov@bham.ac.hk}
\begin{document}
\maketitle

\begin{abstract}
Rehren proved in \cite{RT,R} that a primitive 2-generated axial algebra of Monster type 
$(\al,\bt)$, over a field of characteristic other than $2$, has dimension at most eight if $\al\notin\{2\bt,4\bt\}$. In this note we construct 
an infinite-dimensional $2$-generated primitive axial algebra of Monster type $(2,\frac{1}{2})$ 
over an arbitrary field $\F$ with $\chr(\F)\neq 2,3$. This shows that the second special case, 
$\al=4\bt$, is a true exception to Rehren's bound.
\end{abstract}

\section{Introduction}
 Let  $\F$ be a field and let $\mathcal S$ be a finite subset of $\F$ with $1\in\mathcal S$. 
A {\it fusion law} on $\mathcal S$ is a map 
$$\star\colon \mathcal S \times \mathcal S \to 2^{\mathcal S}.$$
An {\it axial algebra}  over $\F$ with {\it spectrum} $\mathcal S$ and fusion law $\star$ is a 
commutative non-associative $\F$-algebra $V$ generated by a set $\mathcal A$ of nonzero 
idempotents (called {\it axes}) such that, for each $a\in {\mathcal A}$, 
\begin{enumerate}
\item[(Ax1)] $ad(a):v\mapsto av$ is a semisimple endomorphism of $V$ with spectrum contained in 
$\mathcal S$;
\item[(Ax2)] for every $\lm,\mu\in\mathcal S$,  the product of a $\lm$-eigenvector and a 
$\mu$-eigenvector of $\ad_a$ is the sum of $\dl$-eigenvectors, for $\dl\in\lm\star\mu$.
\end{enumerate} 
Furthermore, $V$ is called {\it primitive} if 
\begin{enumerate}
\item[(Ax3)] $V_1=\langle a \rangle$.
\end{enumerate}
An axial algebra over $\F$ is said to be {\it of Monster type} $(\al,\bt)$ if it satisfies 
the fusion law $\Mab$ given in Table~\ref{Ising}, with $\al,\bt\in\F\setminus\{0,1\}$, with 
$\al\neq\bt$.
\begin{table}
$$ 
\begin{array}{|c||c|c|c|c|}
\hline
\star & 1 & 0 & \al & \bt\\
\hline
\hline
1 & 1 & & \al & \bt\\
\hline
0 & & 0 & \al & \bt\\
\hline
\al	& \al & \al & 1,0 & \bt\\
\hline
\bt & \bt & \bt & \bt & 1,0,\al\\
\hline
\end{array}
$$
\bigskip
\caption{Fusion law $\M(\al,\bt)$}\label{Ising}
\end{table} 
    
Axial algebras were introduced by Hall, Rehren and Shpectorov~\cite{HRS,Jordan}  in 
order to axiomatise some key features of certain classes of algebras, such as the weight-2 
components of OZ-type vertex operator algebras, Jordan algebras, Matsuo algebras and Majorana 
algebras (see~\cite{Mb} and the introductions of \cite{HRS}, \cite{R} and \cite{3A}). In this paper we shall assume that the underlying field $\F$ has characteristic different from $2$. This case is of particular interest for finite  group theorists, since most of the finite simple groups can be faithfully represented as groups generated by certain involutory automorphisms, called Miyamoto 
involutions \cite{Miya02}, of such algebras. 
In particular, the 
Griess algebra (see~\cite{Gri}) is a real axial algebra of Monster type $(\frac{1}{4},\frac{1}{32})$ and the Myiamoto involutions of this algebra (also called Majorana involutions) are precisely the involutions of type $2A$ in the Monster, i.e. those whose centraliser in the Monster is the double cover of the Baby Monster. 
    
The classification of $2$-generated axial algebras has a fundamental r\^ole in the development of 
the theory of axial algebras. In a pioneering work \cite{N96}, Norton classified subalgebras of 
the Griess algebra generated by two axes. Norton showed that there are exactly nine isomorphism 
classes of such subalgebras, corresponding to the nine conjugacy classes of dihedral subgroups 
of the Monster generated by two involutions of type $2A$. These algebras have been proven to be, up to 
isomorphisms, the only $2$-generated primitive real axial algebras of Monster type 
$(\frac{1}{4},\frac{1}{32})$ and are now known as Norton-Sakuma algebras (\cite{S}, 
\cite{IPSS10}, \cite{HRS}, \cite{FMS1}). In the minimal non-associative case of axial algebras 
of Jordan type $\eta$, the classification has been obtained by Hall, Rehren and Shpectorov in 
\cite{Jordan}. Note that axial algebras of Jordan type $\eta$ are also axial algebras of Monster 
type $(\al,\bt)$ when $\eta\in\{\al,\bt\}$. In \cite{RT} and \cite{R}, Rehren started a 
systematic study of axial algebras of Monster type $(\al,\bt)$. In particular, he showed that, 
when $\al\not\in\{2\bt,4\bt\}$ every $2$-generated primitive axial algebra of Monster type 
$(\al,\bt)$ has dimension at most $8$. 

This note is part of a project of the authors aimed at classifying all $2$-generated primitive 
axial algebras of Monster type. In particular, in \cite{FMS1} and \cite{FMS2}, the authors 
extend Rehren's result showing that, if $(\al,\bt)\neq(2,\frac{1}{2})$, then any symmetric 
$2$-generated primitive axial algebra of Monster type $(\al,\bt)$ has dimension at most $8$ 
({\it symmetric} means that the map that swaps the generating axes extends to an algebra 
automorphism). Here we prove that the case $(\al,\bt)=(2,\frac{1}{2})$ is indeed an exception.  

\begin{theorem}\nonumber \label{main}
For every field $\F$ of characteristic different from $2$ and $3$, there exists an 
infinite-dimensional $2$-generated primitive axial algebra of Monster type $(2,\frac{1}{2})$
over $\F$.  
\end{theorem}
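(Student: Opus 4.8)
The plan is to build the algebra $V$ by hand. Since the construction must be infinite-dimensional, the Miyamoto group will be the infinite dihedral group $D_\infty=\langle\tau_0,\tau_1\rangle$, so I index a full orbit of axes by $\Z$, declaring $\tau_i\colon a_j\mapsto a_{2i-j}$ and $\rho:=\tau_1\tau_0\colon a_j\mapsto a_{j+2}$. Concretely I take $V$ to be the vector space with basis $\{a_i:i\in\Z\}\cup\{s_n:n\in\N,\ n\ge 1\}$ (setting $s_0:=0$, $s_{-n}:=s_n$), where $s_n$ is designed to record the non-axial part of a product of two axes at distance $n$. I then define a commutative product by explicit, translation-invariant formulas, the first of which is
\[
a_i a_j=\frac{1}{2}(a_i+a_j)-\frac{1}{2} s_{\,|i-j|},\qquad a_i^2=a_i,
\]
together with formulas for $a_i s_n$ and $s_m s_n$ that I write down explicitly and that are forced, up to the construction, by idempotency, by $D_\infty$-invariance, and by the requirement that the associating Frobenius form $(\cdot,\cdot)$ with $(a_i,a_i)=1$ exist.

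Next I verify (Ax1). Each $a_i$ is idempotent by definition, so the task is to diagonalise $\ad_{a_0}$. The key relation is that $\tau_0$ fixes $a_0$ and interchanges $a_n\leftrightarrow a_{-n}$, whence $a_n-a_{-n}$ lies in the $(-1)$-eigenspace of $\tau_0$; the product formula above gives $a_0(a_n-a_{-n})=\frac{1}{2}(a_n-a_{-n})$, so these vectors are $\bt$-eigenvectors with $\bt=\frac{1}{2}$. The remaining combinations $a_0$, $a_n+a_{-n}$, $s_n$ span an $\ad_{a_0}$-invariant subspace on which I compute the characteristic polynomial and check that the eigenvalues are exactly $1,0,\al$ with $\al=2$. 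This is where the hypotheses on $\chr(\F)$ enter: $\bt=\frac{1}{2}$ and $\al=2$ are distinct precisely when $\chr(\F)\neq 3$ (as $2=\frac{1}{2}\iff 4=1$), and denominators $2$ and $3$ must be invertible, so I need $\chr(\F)\neq 2,3$; under these hypotheses the spectrum is contained in $\{1,0,2,\frac{1}{2}\}$ and $\ad_{a_0}$ is semisimple.

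The heart of the proof is the fusion law (Ax2). Having explicit bases for the four eigenspaces of each $a_i$, I must check that every product of a $\lm$-eigenvector and a $\mu$-eigenvector lies in the sum of the $\dl$-eigenspaces prescribed by Table~\ref{Ising}. By the $D_\infty$-symmetry and the associativity of the Frobenius form this reduces to a finite list of representative products, the delicate ones being $\bt\star\bt\subseteq\{1,0,\al\}$, $\al\star\bt\subseteq\{\bt\}$ and $\al\star\al\subseteq\{1,0\}$. I expect this verification, carried out uniformly in the infinitely many indices, to be the main obstacle, and it is exactly here that the special value $\al=4\bt$ is essential: in Rehren's analysis a factor that vanishes when $\al=4\bt$ multiplies the relation that would express $s_n$ in terms of $s_1,\dots,s_{n-1}$, so at $\al=4\bt$ that relation degenerates, no collapse is forced, and the infinite recurrence stays consistent.

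Finally I establish primitivity, generation and infinite-dimensionality. Primitivity (Ax3) follows by showing that $1$ is a simple eigenvalue of each $\ad_{a_i}$, i.e. $V_1=\langle a_i\rangle$, which drops out of the eigenspace computation together with the non-degeneracy of the form on $\langle a_i\rangle$. For generation, $a_0a_1$ produces $s_1$, and the products $a_1 s_1,\,a_0 s_1,\dots$ involve the combinations $a_2+a_0$, $a_{-1}+a_1,\dots$, allowing me to solve first for each new axis $a_{\pm n}$ and then for each $s_n$ by induction; hence $a_0$ and $a_1$ generate $V$. Infinite-dimensionality is then immediate from the construction, since the basis $\{a_i\}\cup\{s_n\}$ is infinite and the multiplication, being defined directly on this basis, imposes no linear relations among its elements; equivalently, the failure of Rehren's collapse at $\al=4\bt$ is precisely what keeps the $s_n$ from ever becoming redundant.
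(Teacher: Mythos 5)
Your plan follows the same strategy as the paper --- indeed, up to the rescaling $s_n=-2\sg_n$, your $a_ia_j=\frac{1}{2}(a_i+a_j)-\frac{1}{2}s_{|i-j|}$ is exactly the paper's algebra $HW$ --- but as written it has a genuine gap at its core: the products $a_is_n$ and $s_ms_n$ are never actually specified. These formulas (the paper's (HW2) and (HW3), with coefficients $-\frac{3}{4},\frac{3}{8},\frac{3}{2}$) are the entire content of the construction; without them there is nothing to diagonalise in (Ax1), nothing to fuse in (Ax2), and nothing to run your generation argument on. Your claim that they are ``forced'' by idempotency, $D_\infty$-invariance and the existence of a Frobenius form is both unjustified (those constraints alone do not pin down the coefficients; the forcing in Rehren's setting comes from the fusion law itself, and carrying out that analysis at $\al=4\bt$ is a substantial computation) and, more importantly, beside the point: necessity of some formulas would not show that the resulting algebra \emph{satisfies} the axioms. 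Existence requires exhibiting a concrete product and verifying the fusion law for it, which is precisely what you defer.

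The second gap is the verification of (Ax2) itself, which you correctly identify as ``the main obstacle'' but do not carry out. Your proposed reduction ``to a finite list of representative products'' via $D_\infty$-symmetry and the Frobenius form does not work as stated: the eigenvectors $u_j,v_j,w_j$ of a fixed axis are indexed by all $j\in\Z_+$, and the dihedral group only permutes axes, so one needs product formulas uniform in the pair $(i,j)$, not finitely many checks. The paper resolves this with two ideas you are missing: explicit closed formulas for $c_ic_j$, $c_i\sg_j$, $u_iu_j$, $u_iv_j$, $v_iv_j$ in terms of the symmetric combinations $c_{i,j}$ and $\sg_{i,j}$ (Lemmas~\ref{products c sigma} and \ref{products u v}), and then two $\Z_2$-gradings --- one from the involution $\tau\colon a_i\mapsto a_{-i}$ separating the $w$-part from the $uv$-part, one from the involution $\theta$ negating the $v_j$ --- which yield the inclusions $HW_uHW_w, HW_vHW_w\subseteq HW_w$, $HW_wHW_w\subseteq HW_u\oplus HW_v$, $HW_uHW_u, HW_vHW_v\subseteq HW_u$, $HW_uHW_v\subseteq HW_v$, i.e.\ the fusion law of Table~\ref{HWtable}, conceptually rather than by brute force. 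Your sketches of (Ax1), primitivity, generation and infinite-dimensionality are consistent with the paper's proof, but they all presuppose the missing formulas, so the proposal remains a plan rather than a proof.
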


\section{The algebra $HW$}

Let $\F$ be a field of characteristic other than $2$. Let $HW$ be the infinite-dimensional 
$\F$-vector space with basis $\{a_i,\sg_j\mid i\in\Z,j\in\Z_+\}$,
$$
HW:=\bigoplus_{i\in \Z}\F a_i\oplus\bigoplus_{j\in \Z_+}\F\sg_j.
$$
Set $\sg_0=0$. Define a commutative non-associative product on $HW$ by linearly extending the 
following values on the basis elements:
\begin{enumerate}
\item[(HW1)] $a_ia_j:=\frac{1}{2}(a_i+a_j)+\sg_{|i-j|}$;\\
\item[(HW2)] $a_i\sg_j:=-\frac{3}{4}a_i+\frac{3}{8}(a_{i-j}+a_{i+j})+\frac{3}{2}\sg_j$;\\
\item[(HW3)] $\sg_i\sg_j:=\frac{3}{4}(\sg_i+\sg_j)-\frac{3}{8}(\sg_{|i-j|}+\sg_{i+j})$.
\end{enumerate} 

\medskip\noindent
In particular, $a_i^2=\frac{1}{2}(a_i+a_i)+\sg_0=a_i$, so each $a_i$ is an idempotent.

We call $HW$ the \emph{highwater algebra} because it was discovered in Venice during the 
disastrous floods in November 2019. In what follows, double angular brackets denote 
algebra generation while single brackets denote linear span.

\begin{theorem}\label{HW}
If $\chr(\F)\neq 3$ then $HW=\langle\!\langle a_0,a_1\rangle\!\rangle$ is a primitive axial 
algebra of Monster type $(2,\frac{1}{2})$.
\end{theorem}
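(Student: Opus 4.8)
The plan is to reduce everything to the single axis $a_0$. First I would check that the two maps $\tau\colon a_i\mapsto a_{i+1},\ \sg_j\mapsto\sg_j$ (translation) and $f\colon a_i\mapsto a_{-i},\ \sg_j\mapsto\sg_j$ (flip) extend to algebra automorphisms of $HW$; this is a direct verification on the defining products (HW1)--(HW3), using $|(i+1)-(j+1)|=|i-j|$ for $\tau$ and $|(-i)-(-j)|=|i-j|$ for $f$. Since $\tau^i$ carries $a_0$ to $a_i$, once $a_0$ is shown to be a primitive axis obeying $\Mab$ with $(\al,\bt)=(2,\frac12)$, every $a_i$ inherits the same properties, so it suffices to treat $a_0$. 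For the generation claim I would argue that $\langle\!\langle a_0,a_1\rangle\!\rangle$ contains every basis vector: from (HW1), $\sg_1=a_0a_1-\frac12(a_0+a_1)$ lies in the subalgebra; then (HW2) applied to $a_1\sg_1$ and $a_0\sg_1$ expresses $a_2$ and $a_{-1}$ as combinations of elements already obtained, and induction (with $\tau$) yields all $a_i$; finally $\sg_j=a_0a_j-\frac12(a_0+a_j)$ gives every $\sg_j$, so $HW=\langle\!\langle a_0,a_1\rangle\!\rangle$.

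Next I would diagonalise $\ad(a_0)$. Because $f$ fixes $a_0$, it commutes with $\ad(a_0)$, so $\ad(a_0)$ preserves the $f$-eigenspaces: the antisymmetric space spanned by $d_j:=a_j-a_{-j}$ and the symmetric space spanned by $a_0$ together with $s_j:=a_j+a_{-j}$ and $\sg_j$ (for $j>0$). A one-line computation gives $a_0d_j=\frac12 d_j$, so the entire antisymmetric space is the $\frac12$-eigenspace $V_\bt$. On the symmetric side, for each fixed $j>0$ the subspace $W_j:=\langle a_0,s_j,\sg_j\rangle$ is $\ad(a_0)$-invariant, and the $2\times2$ block of $\ad(a_0)$ on $\langle s_j,\sg_j\rangle$ modulo $a_0$ has trace $2$ and determinant $0$, hence eigenvalues $0$ and $2$. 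Solving for eigenvectors produces, up to scaling, a $0$-eigenvector $w_j^0=6a_0-3s_j+4\sg_j$ and a $2$-eigenvector $w_j^2=-2a_0+s_j+4\sg_j$. The vectors $a_0$, $\{w_j^0,w_j^2\}_{j>0}$ and $\{d_j\}_{j>0}$ are linearly independent and span $HW$, so $\ad(a_0)$ is semisimple with spectrum $\{1,0,2,\frac12\}\subseteq\mathcal S$, giving (Ax1); and since $a_0$ is the unique eigenvector of eigenvalue $1$, we obtain primitivity (Ax3). Here $\chr(\F)\neq3$ is needed precisely to keep $2$ and $\frac12$ distinct, so that $\al\neq\bt$ and the four eigenspaces do not collapse.

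It remains to verify the fusion law (Ax2), which is the main computational obstacle. The flip $f$ acts as $+1$ on $V_1\oplus V_0\oplus V_\al$ and as $-1$ on $V_\bt$, so, being an automorphism, it settles the $\bt$-row of the table for free: $V_\bt V_\bt$ lands in the symmetric part $V_1\oplus V_0\oplus V_\al$, while $V_0 V_\bt$ and $V_\al V_\bt$ land in the antisymmetric part $V_\bt$. The three genuinely remaining inclusions are $V_0V_0\subseteq V_0$, $V_0V_\al\subseteq V_\al$ and $V_\al V_\al\subseteq V_1\oplus V_0$. To check them I would first record the projection formulas obtained by inverting the change of basis, namely $\sg_k=\frac1{16}(w_k^0+3w_k^2)$ and $s_k=2a_0-\frac14 w_k^0+\frac14 w_k^2$, which read off each eigencomponent of an arbitrary symmetric vector.

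Finally, expanding $w_i^0w_j^0$, $w_i^0w_j^2$ and $w_i^2w_j^2$ via (HW1)--(HW3) yields symmetric vectors supported on the indices $0,i,j,|i-j|,i+j$, after which the projection formulas reduce each required inclusion to a finite linear identity among the $w_k^{0}$ and $w_k^{2}$. The delicate point is the bookkeeping of degenerate indices: when $i=j$ one has $\sg_{|i-j|}=\sg_0=0$ and $s_{|i-j|}=s_0=2a_0$, so the cases $i=j$ and $i\neq j$ must be treated separately. Once these are tracked correctly the three identities hold, (Ax2) follows, and $HW$ is a primitive axial algebra of Monster type $(2,\frac12)$, as claimed.
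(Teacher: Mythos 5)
Your proposal is correct and is essentially the paper's own argument: the reduction to the single axis $a_0$ via the dihedral symmetry, the eigenvectors (your $w_j^0$, $w_j^2$, $d_j$ are, up to sign, the paper's $u_j$, $v_j$, $w_j$), and the $\pm 1$-grading induced by the flip to dispose of the $\frac{1}{2}$-row all coincide, while for the three remaining inclusions the paper performs exactly the computation you defer, organizing it via the auxiliary basis $c_j:=2a_0-(a_j+a_{-j})$, $\sg_j$, in which $c_ic_j=2\sg_{i,j}$, $c_i\sg_j=\frac{3}{8}c_{i,j}$, $\sg_i\sg_j=-\frac{3}{8}\sg_{i,j}$, whence $u_iu_j=3u_{i,j}$, $u_iv_j=-3v_{i,j}$, $v_iv_j=-u_{i,j}$ (here $x_{i,j}:=-2x_i-2x_j+x_{|i-j|}+x_{i+j}$ and $u_0=v_0=0$, which absorbs your degenerate cases), so the identities you assert do in fact hold. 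Two small points: that last computation is the crux of the theorem and should be written out rather than asserted, and $\chr(\F)\neq 3$ is needed not only to keep $2\neq\frac{1}{2}$ but also in your generation step, where you divide by $\frac{3}{8}$ (indeed, in characteristic $3$ the subalgebra $\langle\!\langle a_0,a_1\rangle\!\rangle$ is only $3$-dimensional).
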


Manifestly, this result implies Theorem \ref{main}.

If $\chr(\F)=3$ then $2=\frac{1}{2}$ and so the concept of an algebra of Monster type 
$(2,\frac{1}{2})$ is not defined. However, the four-term decomposition typical for algebras of 
Monster type still exists, and so $HW$ in characteristic 3 is an example of an axial 
decomposition algebra as defined in \cite{DSV}. We also prove that $HW$ in this case is a Jordan 
algebra. Note, however, that because a lot of structure constants in $HW$ become zero in 
characteristic $3$, $HW$ is no longer generated by $a_0$ and $a_1$. In fact, every pair of 
distinct axes $a_i, a_j$ generates a $3$-dimensional subalgebra, linearly spanned by $a_i$, 
$a_j$, and $\sigma_{|i-j|}$, and  isomorphic to the algebra $Cl^{00}(\F^2,b_2)$ (see 
\cite[Theorem~(1.1)]{Jordan}).

We start with a number of observations concerning the properties of $HW$. First of all, we show 
that it is not a simple algebra. Consider the linear map $\lm:HW\to\F$ defined on the basis of 
$HW$ as follows: $\lm(a_i)=1$ for all $i\in\Z$ and $\lm(\sg_j)=0$ for all $j\in\Z_+$. 

\begin{lemma}\label{lambda}
The map $\lm$ is a homomorphism of algebras.
\end{lemma}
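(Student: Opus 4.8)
The plan is to exploit bilinearity. Since $\lm$ is $\F$-linear and the multiplication on $HW$ is bilinear, the relation $\lm(xy)=\lm(x)\lm(y)$ extends automatically from pairs of basis vectors to arbitrary elements; so it suffices to verify it on the three types of products of basis elements, namely $a_ia_j$, $a_i\sg_j$ and $\sg_i\sg_j$. For each type I would substitute the defining relation (HW1)--(HW3), apply $\lm$ termwise, and use $\lm(a_k)=1$ and $\lm(\sg_k)=0$ throughout, with the convention $\sg_0=0$ forcing $\lm(\sg_0)=\lm(0)=0$ as well (so the cases where indices coincide or a difference vanishes are handled uniformly).

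Carrying this out is immediate. For (HW1), $\lm(a_ia_j)=\frac{1}{2}(1+1)+0=1=\lm(a_i)\lm(a_j)$. For (HW3), every summand on the right-hand side is some $\sg_k$, so $\lm(\sg_i\sg_j)=0=\lm(\sg_i)\lm(\sg_j)$. The only case involving an actual cancellation is (HW2): applying $\lm$ gives $-\frac{3}{4}+\frac{3}{8}(1+1)+\frac{3}{2}\cdot 0=-\frac{3}{4}+\frac{3}{4}=0$, which matches $\lm(a_i)\lm(\sg_j)=1\cdot 0=0$.

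There is no genuine obstacle here: the argument is a short structure-constant check, valid over any field with $\chr(\F)\neq 2$ so that the coefficients $\frac{1}{2},\frac{3}{4},\frac{3}{8},\frac{3}{2}$ are defined. The one point worth flagging is simply that the coefficient balance $-\frac{3}{4}+\frac{3}{8}+\frac{3}{8}=0$ in (HW2) is exactly what makes $\lm$ multiplicative on the mixed products; conceptually this says that $\ker\lm$ (the span of the $\sg_j$ together with the differences $a_i-a_j$) is an ideal of codimension one.
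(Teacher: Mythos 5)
Your proof is correct and follows exactly the paper's own argument: reduce to basis elements by bilinearity, then verify the three product types (HW1)--(HW3) using $\lm(a_k)=1$ and $\lm(\sg_k)=0$, with the key cancellation $-\frac{3}{4}+\frac{3}{8}+\frac{3}{8}=0$ in the mixed case. Nothing is missing; the closing remark about $\ker\lm$ being a codimension-one ideal is a nice addition that the paper also makes immediately after the lemma.
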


\begin{proof}
It suffices to show that $\lm$ is multiplicative, i.e., $\lm(uv)=\lm(u)\lm(v)$ for all $u,v\in 
HW$. Since this equality is linear in both $u$ and $v$, it suffices to check it for the elements 
of the basis. If $u=a_i$ and $v=a_j$ then $\lm(a_ia_j)=\lm(\frac{1}{2}(a_i+a_j)+\sg_{|i-j|})=
\frac{1}{2}+\frac{1}{2}=1=1\cdot 1=\lm(a_i)\lm(a_j)$. If $u=a_i$ and $v=\sg_j$ then 
$\lm(a_i\sg_j))=\lm(-\frac{3}{4}a_i+\frac{3}{8}(a_{i-j}+a_{i+j})+\frac{3}{2}\sg_j)=
-\frac{3}{4}+\frac{3}{8}+\frac{3}{8}=0=1\cdot 0=\lm(a_i)\lm(\sg_j)$. Finally, if $u=\sg_i$ and 
$v=\sg_j$ then $\lm(\sg_i\sg_j))=\lm(\frac{3}{4}(\sg_i+\sg_j)-\frac{3}{8}(\sg_{|i-j|}+
\sg_{i+j}))=0=\lm(\sg_i)\lm(\sg_j)$. So the equality holds in all cases and so $\lm$ is indeed 
an algebra homomorphism.
\end{proof}

Such a homomorphism is usually called a weight function and its existence shows that $HW$ is a 
baric (or weighted) algebra. Since $\lm$ is a homomorphism, its kernel $J$ is an ideal of 
codimension $1$.

Using $\lm$, we can also define a bilinear form on $HW$. Namely, for $u,v\in HW$, we set 
$(u,v):=\lm(u)\lm(v)$. It is immediate that this is a bilinear form; furthermore, it associates 
with the algebra product. Indeed, for $u,v,w\in HW$, we have $(uv,w)=\lm(uv)\lm(w)=
\lm(u)\lm(v)\lm(w)=\lm(u)\lm(vw)=(u,vw)$. In the theory of axial algebras such forms are called 
Frobenius forms. The form $(\cdot,\cdot)$ further satisfies the property that $(a_i,a_i)=
\lm(a_i)\lm(a_i)=1\cdot 1=1$, which is often required in the definition of a Frobenius form.

The next observation to make is that $HW$ is quite symmetric. Let $D$ be the infinite dihedral 
group acting naturally on $\Z$. For $\rho\in D$, let $\phi_\rho$ be the linear map that fixes 
all $\sg_j$ and sends $a_i$ to $a_{i^\rho}$.  Then $\phi_\rho$ is an automorphism of $HW$ and 
the map $\rho\mapsto \phi_\rho$ defines a faithful representation of $D$ as an automorphism 
group of $HW$. When $\chr(\F)\neq 3$, we claim that $Aut(HW)\cong D$. For this, we need the 
following observation.

\begin{lemma}\label{idempotents}
If $\chr(\F)\neq 3$, the elements $\{a_i\:|\:i\in \Z\}$ are the only nontrivial idempotents in 
$HW$.
\end{lemma}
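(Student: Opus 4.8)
The plan is to take an arbitrary element $u=\sum_{i\in\Z}c_ia_i+\sum_{j\geq 1}d_j\sg_j$ of finite support with $u^2=u$ and show that either $u=0$ or $u=a_i$ for some $i$. A first sanity check is that applying $\lm$ gives $\lm(u)^2=\lm(u)$, so $s:=\lm(u)=\sum_i c_i\in\{0,1\}$; these two values will correspond exactly to the cases $u=a_i$ and $u=0$. The main device is to encode the two families of coefficients by the Laurent polynomials $A(x):=\sum_i c_ix^i$ and $B(x):=\sum_{j\geq1}d_j(x^j+x^{-j})$ in $\F[x,x^{-1}]$, and to read the idempotency equation componentwise on the basis $\{a_i\}\cup\{\sg_j\}$ as identities between these polynomials. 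The key structural point is that $\F[x,x^{-1}]$ is an integral domain, which will let me cancel the factor $A(x)$ once I know it is nonzero.

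First I would compute the $a$-component of $u^2$ using (HW1) and (HW2). Collecting the coefficient of each $a_k$ and writing $D:=\sum_{j\geq1}d_j$, the condition that the $a$-part of $u^2$ equals that of $u$ becomes the single Laurent-polynomial identity
\[
A(x)\Bigl(s-\tfrac32 D-1+\tfrac34 B(x)\Bigr)=0 .
\]
Here the translation terms $a_{i\pm j}$ in (HW2) are exactly what turn the convolution $\sum_{j}(c_{k+j}+c_{k-j})d_j$ into the product $A(x)B(x)$, since $B$ is symmetric under $x\mapsto x^{-1}$. If $A(x)\neq0$, cancelling it in the domain forces $s-\tfrac32 D-1+\tfrac34 B(x)=0$; as $B(x)$ has no constant term this first yields $B(x)=0$, i.e.\ all $d_j=0$, and then $s=1$. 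This is one of the places where $\chr(\F)\neq3$ enters, since the coefficient $\tfrac34$ must be nonzero.

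Next, assuming $A(x)\neq0$ (equivalently $s=1$) and all $d_j=0$, I would read off the $\sg$-component of $u^2=u$ from (HW1): it collapses to $\sum_i c_ic_{i+m}=0$ for every $m\geq1$, which says precisely that the autocorrelation $A(x)A(x^{-1})$ equals the constant $\gamma:=\sum_i c_i^2$. Since $A\neq0$ and $\F[x,x^{-1}]$ is a domain, $\gamma\neq0$, so $A(x)$ is a unit. The units of $\F[x,x^{-1}]$ are the monomials $\gamma x^n$, whence $A(x)=c_n x^n$ has a single nonzero coefficient; together with $s=A(1)=1$ this gives $c_n=1$ and $u=a_n$, as required.

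It remains to treat the case $A(x)=0$, i.e.\ $u=\sum_{j\geq1}d_j\sg_j$ with $\lm(u)=0$, and I expect this to be the genuinely delicate point, because the products (HW3) mix the \emph{difference} index $|i-j|$ with the \emph{sum} index $i+j$ and so do not reduce to one clean Laurent-polynomial identity. Rather than force a generating-function identity, I would argue by leading coefficient: let $M=\max\{j:d_j\neq0\}$ and suppose $M\geq1$. Evaluating the $\sg_m$-component of $u^2=u$ at $m=2M$, every term proportional to $d_{2M}$ vanishes (as $2M>M$), the autocorrelation $\sum_j d_jd_{j+2M}$ vanishes for index reasons, and the only surviving contribution of the convolution $-\tfrac38\sum_{j=1}^{m-1}d_jd_{m-j}$ is the single term $-\tfrac38 d_M^2$. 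Thus $d_M^2=0$, contradicting $d_M\neq0$, again using $\chr(\F)\neq3$ to keep $\tfrac38\neq0$. Hence all $d_j=0$ and $u=0$. Combining the two cases completes the proof.
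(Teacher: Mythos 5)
Your proof is correct, but it takes a genuinely different route from the paper's. The paper argues entirely by extremal indices: given a putative idempotent $e$ not equal to any $a_i$, it splits into three cases (both $a$- and $\sg$-terms present; only $\sg$-terms; only $a$-terms) and in each case shows that $e^2$ involves a basis vector beyond the support of $e$ --- namely $a_{i+j}$, $\sg_{2j}$, or $\sg_{i-i'}$ --- using the maximal $a$- and $\sg$-indices (and, in the last case, also the minimal $a$-index). Your Laurent-polynomial encoding replaces the first and third of these cases by a single structural computation: the $a$-component of $u^2=u$ factors as $A(x)\bigl(s-\frac{3}{2}D-1+\frac{3}{4}B(x)\bigr)=0$ (which I have checked against (HW1)--(HW2); the symmetry of $B$ under $x\mapsto x^{-1}$ is what makes the convolution a product), so cancellation in the domain $\F[x,x^{-1}]$ kills the whole $\sg$-part at once, and then the autocorrelation identity $A(x)A(x^{-1})=\gamma$ together with the classification of units of $\F[x,x^{-1}]$ pins $A$ down to a monomial. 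Your treatment of the remaining pure-$\sg$ case, via the coefficient of $\sg_{2M}$, coincides with the paper's second case. What the paper's argument buys is brevity and complete elementarity; what yours buys is uniformity and reusable structure --- the factored idempotent equation and the autocorrelation relation say more about $HW$ than a single contradictory coefficient, and could plausibly be recycled, e.g., in studying ideals --- though one should note that the unit classification you invoke is itself proved by exactly the kind of leading-term comparison the paper uses directly, so the two arguments are cousins at bottom. Both uses of $\chr(\F)\neq 3$ (the coefficients $\frac{3}{4}$ and $\frac{3}{8}$) appear in your proof in the right places.
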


\begin{proof}
Let 
$$e=\sum_{i\in\Z}r_ia_i+\sum_{j\in \Z_+}s_j\sg_j$$ 
be a nontrivial idempotent of $HW$. Suppose by contradiction that $e$ is not one of the $a_i$'s. 
First, suppose that $e$ involves both nonzero terms $a_i$ and nonzero terms $\sg_j$. Select 
the maximum $i$ with $r_i\neq 0$ and, similarly, the maximum $j$ with $s_j\neq 0$. Then, by 
(HW2), $e=e^2$ involves $a_{i+j}$, which is a contradiction. Thus, $e$ cannot contain both 
$a_i$'s and $\sg_j$'s. If $e$ contains no $a_i$'s, select the maximum $j$ with $s_j\neq 0$. Then 
it follows from (HW3) that $e=e^2$ involves $\sg_{2j}$, which is again a contradiction. It 
remains to consider the case where $e$ only involves $a_i$'s. Clearly, it must involve two 
different $a_i$. Say, let $i$ be maximum such that $r_i\neq 0$ and $i'$ be minimum such that 
$r_{i'}\neq 0$. Then $i\neq i'$ and (HW1) yields that $e=e^2$ involves $\sg_{i-i'}$; a 
contradiction.
\end{proof}

\begin{proposition}\label{aut}
If $\chr(\F)\neq 3$, then $Aut(HW)\cong D$. 
\end{proposition}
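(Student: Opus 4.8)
The plan is to leverage the embedding $\rho\mapsto\phi_\rho$ of $D$ into $\Aut(HW)$, which is already known to be a faithful homomorphism; it therefore suffices to prove that every $g\in\Aut(HW)$ equals $\phi_\rho$ for some $\rho\in D$. The starting point is Lemma~\ref{idempotents}: since $\chr(\F)\neq 3$, the only nontrivial idempotents of $HW$ are the axes $a_i$, $i\in\Z$. As any algebra automorphism permutes the nontrivial idempotents, $g$ must permute $\{a_i\mid i\in\Z\}$, and hence induces a bijection $\pi$ of $\Z$ with $a_i^g=a_{i^\pi}$ for all $i$. The whole problem then reduces to identifying which permutations $\pi$ can occur, and to showing that these are exactly the elements of $D$ in its natural action on $\Z$.

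To constrain $\pi$, I would apply $g$ to the defining relation (HW1). Since $g$ is linear and multiplicative, $g(a_ia_j)=g(a_i)g(a_j)$ gives, after cancelling the common term $\frac{1}{2}(a_{i^\pi}+a_{j^\pi})$, the identity $\sg_{|i-j|}^g=\sg_{|i^\pi-j^\pi|}$. The left-hand side depends only on $|i-j|$, so this at once forces the key functional equation: $|i^\pi-j^\pi|$ depends only on $|i-j|$. In particular the action of $g$ on the $\sg_j$ is completely determined by $\pi$, and the entire automorphism is now encoded in $\pi$.

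The heart of the argument, and the step I expect to require the most care, is solving this functional equation to conclude $\pi\in D$. Writing $b_i:=i^\pi$ and $d:=|b_0-b_1|\geq 1$, consecutive images satisfy $b_{i+1}-b_i=\pm d$ for every $i$. Applying the functional equation to pairs at distance $2$ shows that $|b_{i+2}-b_i|$ is a constant independent of $i$; since $b_{i+2}-b_i=(b_{i+1}-b_i)+(b_{i+2}-b_{i+1})\in\{-2d,0,2d\}$ and the value $0$ is excluded by injectivity of $\pi$, the two consecutive steps must share the same sign. Iterating, the sign $\epsilon\in\{\pm 1\}$ of $b_{i+1}-b_i$ is independent of $i$, so $b_i=b_0+\epsilon d\,i$ is an arithmetic progression. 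Bijectivity of $\pi$ onto $\Z$ then forces $d=1$, whence $\pi$ is the translation ($\epsilon=1$) or reflection ($\epsilon=-1$) $i\mapsto b_0+\epsilon i$, an element of $D$. Finally, with $d=1$ the functional equation gives $\sg_j^g=\sg_j$ for all $j$, so $g$ agrees with $\phi_\pi$ on the whole basis and $g=\phi_\pi$. This proves $\Aut(HW)=\{\phi_\rho\mid\rho\in D\}\cong D$.
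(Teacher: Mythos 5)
Your proof is correct, and at the decisive step it takes a genuinely different route from the paper's. Both arguments begin the same way: Lemma~\ref{idempotents} forces any automorphism to permute the axes $\{a_i\mid i\in\Z\}$, and applying the automorphism to (HW1) ties its action on the $\sg_j$ to the induced permutation $\pi$ of indices. From there the paper proceeds graph-theoretically: for each $\sg_j$ it forms the graph $\Gm_{\sg_j}$ whose edges are the pairs $\{a_i,a_k\}$ with $a_ia_k-\frac{1}{2}(a_i+a_k)=\sg_j$, observes that $\Gm_{\sg_j}$ has exactly $j$ connected components so that no two such graphs are isomorphic, concludes that every automorphism fixes every $\sg_j$, and finally identifies $\Aut(HW)$ with the automorphism group of the infinite path graph $\Gm_{\sg_1}$, which is $D$. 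You instead encode everything in the functional equation $|i^\pi-j^\pi|=f(|i-j|)$ and solve it by direct arithmetic: constant step size $d$, signs of consecutive steps forced equal by injectivity, hence an arithmetic progression, and $d=1$ by surjectivity; the fixing of all $\sg_j$ then falls out as a corollary rather than being an intermediate target (note the order is reversed relative to the paper, which pins down the $\sg_j$ first and the axes second). Each approach buys something: the paper's is shorter and more conceptual, isolating a clean isomorphism invariant (the number of components of $\Gm_{\sg_j}$) and implicitly appealing to the known automorphism group of the infinite path graph; yours is more self-contained, since your step-sign argument is in effect an explicit elementary proof of that graph-theoretic fact, so nothing outside the paper's own lemmas is invoked. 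Both proofs are complete.
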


\begin{proof}
Let $\varphi\in\Aut(HW)$. By Lemma~\ref{idempotents}, $\varphi$ induces a permutation on the set 
$\{a_i\mid i\in\Z\}$ and consequently, by $(HW1)$, $\varphi$ induces a permutation on the set 
$\{\sg_j\mid j\in \Z_+\}$. Now observe that the action on the latter set has to be trivial. 
Indeed, for $\sg\in\{\sg_j\mid j\in\Z_+\}$, define the graph $\Gm_\sg$ with vertices $\{a_i,\mid 
i\in\Z\}$, where $a_i$ is adjacent to $a_k$ if and only if $\sg=a_ia_k-\frac{1}{2}(a_i+a_k)$. It 
is easy to see that if $\sg=\sg_j$ then $\Gm_\sg$ has exactly $j$ connected components. On the 
other hand, if $\sg^\varphi=\sg'$ then $\Gm_\sg^\varphi=\Gm_{\sg'}$. Since no two graphs 
$\Gm_\sg$ are isomorphic, we conclude that indeed $\varphi$ fixes all $\sg_j$.

In particular, the entire group $\Aut(HW)$ fixes $\sg_1$, and so it acts on the infinite string 
graph $\Gm_{\sg_1}$. Since this action is faithful, we conclude that $\Aut(HW)\cong D$.
\end{proof}

We are aiming to show that the $a_i$ are axes satisfying the fusion law 
$\M(2,\frac{1}{2})$. Since $D$ is transitive on the $a_i$'s, it suffices to check this for just 
one of them, say $a=a_0$. We start with the eigenvalues and eigenspaces of $\ad_a$.

Select $j\in\Z_+$ and set $U=\langle a,a_{-j},a_j,\sg_j\rangle$. It is immediate to see that $U$ 
is invariant under $\ad_a$, the latter being represented by the following matrix:
$$
\left(
\begin{array}{rrrr}
1&0&0&0\\
\frac{1}{2}&\frac{1}{2}&0&1\\
\frac{1}{2}&0&\frac{1}{2}&1\\
-\frac{3}{4}&\frac{3}{8}&\frac{3}{8}&\frac{3}{2}
\end{array}
\right)
$$
This has characteristic polynomial $x^4-\frac{7}{2}x^3+\frac{7}{2}x^2-x=
(x-1)x(x-2)(x-\frac{1}{2})$ and eigenspaces $U_1=\langle a\rangle$, $U_0=\langle 
u_j\rangle$, $U_2=\langle v_j\rangle$, $U_{\frac{1}{2}}=\langle w_j\rangle$, where
\begin{eqnarray} \label{uvw}
&&u_j:=6a-3(a_{-j}+a_j)+4\sg_j \nonumber\\ 
&&v_j:= 2a-(a_{-j}+a_j)-4\sg_j \\ 
&&w_j:=a_{-j}-a_j. \nonumber
\end{eqnarray}
The only exception to this statement arises when $\chr(\F)=3$. In this case, $2=\frac{1}{2}$ 
and so $U_2$ and $U_{\frac{1}{2}}$ merge into a single $2$-dimensional eigenspace 
$U_2=U_{\frac{1}{2}}=\langle v_j,w_j\rangle$.

In all cases, we can write that $U=\langle a,u_j,v_j,w_j\rangle$, that is, $a_{-j}$, $a_j$ 
and $\sg_j$ can be expressed via these vectors. From this we deduce the following.

\begin{lemma}
The adjoint map $\ad_a$ is semisimple on $HW$ and
\begin{enumerate}
\item[\rm(a)] if $\chr(\F)\neq 3$ then the spectrum of $\ad_a$ is $\{1,0,2,\frac{1}{2}\}$ and 
the eigenspaces are $HW_1=\langle a\rangle$, $HW_0=\langle u_j\mid j\in\Z_+\rangle$, 
$HW_2=\langle v_j\mid j\in\Z_+\rangle$, and $HW_{\frac{1}{2}}=\langle w_j\mid j\in \Z_+\rangle$;
\item[\rm(b)] if $\chr(\F)=3$ then the spectrum is $\{1,0,\frac{1}{2}\}$ and the eigenspaces are 
$HW_1=\langle a\rangle$, $HW_0=\langle u_j\mid j\in\Z_+\rangle$, and $HW_{\frac{1}{2}}=\langle 
v_j,w_j\mid j\in \Z_+\rangle$.
\end{enumerate}
\end{lemma}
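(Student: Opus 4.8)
The plan is to produce an explicit eigenbasis of $\ad_a$ on the whole of $HW$ by assembling the local eigenvectors already found on each invariant subspace $U=U_j$, and then to read off semisimplicity and the eigenspaces from it. Recall that $a=a_0$, and that for every $j\in\Z_+$ the four basis elements $a,a_{-j},a_j,\sg_j$ span the $\ad_a$-invariant subspace $U_j=\langle a,a_{-j},a_j,\sg_j\rangle$, on which $\ad_a$ is the displayed matrix with eigenvectors $a,u_j,v_j,w_j$. The first step is to observe that the defining basis $\{a_i,\sg_j\}$ distributes among $\F a$ and the subspaces $\langle a_{-j},a_j,\sg_j\rangle$ ($j\in\Z_+$), with $a_0=a$ the only shared vector, so that $HW=\F a\oplus\bigoplus_{j\in\Z_+}\langle a_{-j},a_j,\sg_j\rangle$.

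Next I would replace, on each $U_j$, the basis $\{a,a_{-j},a_j,\sg_j\}$ by $\{a,u_j,v_j,w_j\}$. The transition matrix is read off directly from (\ref{uvw}); its determinant equals $32$, which is nonzero whenever $\chr(\F)\neq2$, so $\{a,u_j,v_j,w_j\}$ is again a basis of $U_j$. Passing to the infinite index set, the set $\{a\}\cup\{u_j,v_j,w_j\mid j\in\Z_+\}$ then spans $HW$. For its linear independence I would argue coordinatewise: in any vanishing linear combination, for each fixed $j$ the coefficients of the basis vectors $a_{-j},a_j,\sg_j$ receive contributions only from $u_j,v_j,w_j$, since the vectors indexed by $k\neq j$ and $a$ itself involve none of $a_{\pm j},\sg_j$; by the local independence just established, the corresponding triple of coefficients vanishes, and then the coefficient of $a$ vanishes as well. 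Hence $\{a\}\cup\{u_j,v_j,w_j\mid j\in\Z_+\}$ is a basis of $HW$.

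Since $a$ is idempotent and, by the eigenvalue computation on $U_j$, each $u_j$, $v_j$, $w_j$ is an eigenvector of $\ad_a$ of eigenvalue $0$, $2$, $\frac12$ respectively, this basis consists of eigenvectors; in particular $\ad_a$ is semisimple. Grouping the basis eigenvectors by eigenvalue yields the eigenspaces, and the two cases split exactly as claimed: when $\chr(\F)\neq3$ the values $1,0,2,\frac12$ are pairwise distinct and one obtains (a); when $\chr(\F)=3$ one has $2=\frac12$ (and both differ from $1$), so the $v_j$ and $w_j$ together span a single eigenspace and one obtains (b). I expect the only genuinely nonroutine point to be handling the direct-sum decomposition and hence semisimplicity over the infinite index set $\Z_+$: one cannot certify this by a single global determinant, which is why I rely on the coordinatewise independence argument above; everything else is bookkeeping built on the already-verified behaviour of $\ad_a$ on the finite-dimensional pieces $U_j$.
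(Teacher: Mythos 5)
Your proposal is correct and follows essentially the same route as the paper: the paper deduces the lemma directly from the analysis of the invariant subspaces $U_j=\langle a,a_{-j},a_j,\sg_j\rangle=\langle a,u_j,v_j,w_j\rangle$, observing that every basis vector of $HW$ lies in some $U_j$, so $HW$ is spanned by the eigenvectors $a,u_j,v_j,w_j$. Your coordinatewise independence check is a detail the paper leaves implicit, but the underlying argument is identical.
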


In order to avoid the complication arising in characteristic $3$, we will use the notation 
$HW_u:=\langle u_j\mid j\in\Z_+\rangle$, $HW_v:=\langle v_j\mid j\in\Z_+\rangle$, and 
$HW_w=\langle w_j\mid j\in \Z_+\rangle$ calling these subspaces the $u$-, $v$-, and $w$-parts 
of $HW$, respectively. A similar terminology will be used for sums of these subspaces. Thus, 
in all characteristics we have the decomposition
$$HW=\langle a\rangle\oplus HW_u\oplus HW_v\oplus HW_w.$$
Let us relate this decomposition to the ideal $J$.

\begin{lemma}
$J=HW_u\oplus HW_v\oplus HW_w$.
\end{lemma}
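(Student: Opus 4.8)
The plan is to establish the equality by proving the two inclusions separately, using the explicit spanning vectors of the $u$-, $v$-, and $w$-parts together with the direct sum decomposition $HW=\langle a\rangle\oplus HW_u\oplus HW_v\oplus HW_w$ established above and the fact that $J=\ker\lm$.

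First I would check that each spanning eigenvector lies in $J$. Using the formulas (\ref{uvw}) and the defining values $\lm(a_i)=1$ for all $i\in\Z$ and $\lm(\sg_j)=0$ for all $j\in\Z_+$, one computes $\lm(u_j)=6-3\cdot 2+4\cdot 0=0$, then $\lm(v_j)=2-2-4\cdot 0=0$, and finally $\lm(w_j)=1-1=0$, for every $j\in\Z_+$. Since $\lm$ is linear, this already gives the inclusion $HW_u\oplus HW_v\oplus HW_w\subseteq J$.

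For the reverse inclusion I would exploit that $a=a_0$ satisfies $\lm(a)=1\neq 0$, so that $\langle a\rangle$ is a $\lm$-complement to $J$. Given an arbitrary $x\in J$, the decomposition lets me write $x=\mu a+w$ with $\mu\in\F$ and $w\in HW_u\oplus HW_v\oplus HW_w$. Applying $\lm$ and using the previous step, I get $0=\lm(x)=\mu\lm(a)+\lm(w)=\mu$, whence $\mu=0$ and $x=w\in HW_u\oplus HW_v\oplus HW_w$. This proves $J\subseteq HW_u\oplus HW_v\oplus HW_w$, and combining the two inclusions yields the claim.

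I do not expect any genuine obstacle here: the argument is a short linear-algebra computation once the weight function $\lm$ and the eigenvector formulas are available. The only point that requires care is that the decomposition is truly a direct sum, so that the coefficient $\mu$ of $a$ in $x=\mu a+w$ is well defined and is detected exactly by $\lm$; this is precisely what makes the one-dimensional space $\langle a\rangle$ a complement to the codimension-$1$ ideal $J$ and forces $J$ to coincide with the $\lm$-trivial part $HW_u\oplus HW_v\oplus HW_w$.
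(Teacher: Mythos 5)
Your proof is correct and takes essentially the same route as the paper: the inclusion $HW_u\oplus HW_v\oplus HW_w\subseteq J$ is obtained by checking that $\lm$ vanishes on the spanning vectors $u_j,v_j,w_j$ (what the paper calls ``by inspection''), and the equality then follows from the fact that $\langle a\rangle$ complements both subspaces. The paper phrases this last step as a codimension-$1$ count, while you spell it out by decomposing $x\in J$ as $\mu a+w$ and applying $\lm$; these are the same argument in different clothing.
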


\begin{proof}
By inspection, the weight function is zero on $HW_u\oplus HW_v\oplus HW_w$, so this sum is 
contained in $J$. Now the equality is forced because both $J$ and $HW_u\oplus HW_v\oplus HW_w$ 
have codimension $1$ in $HW$.
\end{proof}

So $J$ is the $uvw$-part of $HW$.

The stabiliser $D_a$ of $a$ in $D$ has order $2$ and it is generated by the involution $\tau$ sending every $a_i$ to 
$a_{-i}$ (and fixing every $\sg_j$). From this, it is immediate to see that the following holds.

\begin{lemma}
The involution $\tau$ acts as identity on $\langle a\rangle\oplus HW_u\oplus HW_v$ and as 
minus identity on $HW_w$. In particular, $HW_\tau=\langle a\rangle\oplus HW_u\oplus HW_v$ is 
the fixed subalgebra of $\tau$.
\end{lemma}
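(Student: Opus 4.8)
The plan is to compute directly how $\tau$ acts on the spanning eigenvectors $u_j$, $v_j$, $w_j$ and on the axis $a$, and then read off the decomposition. Recall that $\tau$ fixes every $\sg_j$ and sends each $a_i$ to $a_{-i}$; in particular $\tau$ fixes $a=a_0$ and interchanges $a_{-j}$ and $a_j$ for every $j\in\Z_+$.

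First I would observe that the defining expressions $u_j=6a-3(a_{-j}+a_j)+4\sg_j$ and $v_j=2a-(a_{-j}+a_j)-4\sg_j$ are symmetric in the pair $\{a_{-j},a_j\}$ and involve only $a$, the sum $a_{-j}+a_j$, and $\sg_j$, all of which $\tau$ fixes. Hence $\tau(u_j)=u_j$ and $\tau(v_j)=v_j$ for every $j$. By contrast, $w_j=a_{-j}-a_j$ is antisymmetric in that pair, so $\tau(w_j)=a_j-a_{-j}=-w_j$. Since $\tau$ is linear, it follows that $\tau$ is the identity on $\langle a\rangle\oplus HW_u\oplus HW_v$ and minus the identity on $HW_w$.

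For the final assertion I would use the decomposition $HW=\langle a\rangle\oplus HW_u\oplus HW_v\oplus HW_w$, so that the computation above already diagonalises $\tau$ with eigenvalues $+1$ and $-1$. Because $\chr(\F)\neq 2$, these two eigenspaces meet only in $0$, and the $+1$-eigenspace is precisely $\langle a\rangle\oplus HW_u\oplus HW_v$; this is the fixed subspace of $\tau$. It is automatically a subalgebra, since the fixed points of any algebra automorphism are closed under the product.

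I do not expect any genuine obstacle here: the statement reduces to the manifest symmetry of the eigenvectors under the index reflection $i\mapsto -i$, and the only point requiring a word of care is that $\chr(\F)\neq 2$ guarantees the $+1$- and $-1$-eigenspaces are complementary, so that the fixed subspace is exactly the $+1$-part and no larger.
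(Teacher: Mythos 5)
Your proof is correct and follows exactly the route the paper intends: the paper states this lemma as ``immediate'' from the definition of $\tau$, and your verification (symmetry of $u_j$, $v_j$ in the pair $\{a_{-j},a_j\}$, antisymmetry of $w_j$, plus $\chr(\F)\neq 2$ to identify the fixed subspace with the $+1$-eigenspace) is precisely the omitted computation. No gaps; the remark that fixed points of an automorphism form a subalgebra correctly handles the final assertion.
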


We find a further subalgebra by intersecting $HW_\tau$ with the ideal $J$. Namely, we set 
$V:=HW_\tau\cap J=HW_u\oplus HW_v$, the $uv$-part of $HW$. Clearly, $V$ is an ideal of 
$HW_\tau$. 

The vectors $u_j$ and $v_j$ form a basis of $V$. We will also use a second basis. For 
$j\in\Z_+$, let $c_j:=2a-(a_{-j}+a_j)$. These we combine with the elements $\sg_j$.

Let us, first of all, record the following, see the definition of $u_j$ and $v_j$ in 
(\ref{uvw}). 

\begin{lemma} \label{transition}
For all $j\in\Z_+$, we have $u_j=3c_j+4\sg_j$ and $v_j=c_j-4\sg_j$.
\end{lemma}

It is also easy to express $c_j$ and $\sg_j$ via $u_j$ and $v_j$. This means that the set of all 
vectors $c_j$ and $\sg_j$ is a basis of $V$. In the following lemma we compute the products for 
this basis. Note that we only need to compute the products $c_ic_j$ and $c_i\sg_j$, because 
$\sg_i\sg_j$ are given in (HW3). It will be convenient to use the following notation: for 
$i,j\in\Z_+$, we let $c_{i,j}:=-2c_i-2c_j+c_{|i-j|}+c_{i+j}$ and $\sg_{i,j}:= 
-2\sg_i-2\sg_j+\sg_{|i-j|}+\sg_{i+j}$. For example, (HW3) can now be restated as: 
$\sg_i\sg_j=-\frac{3}{8}\sg_{i,j}$.

\begin{lemma} \label{products c sigma}
For $i,j\in\Z_+$, we have that $c_ic_j=2\sg_{i,j}$ and $c_i\sg_j=\frac{3}{8}c_{i,j}$.
\end{lemma}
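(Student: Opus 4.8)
The plan is to verify both identities by direct expansion, exploiting the fact that each is linear in the basis vectors that occur. For both products I would substitute the definition $c_k = 2a - (a_{-k} + a_k)$ (with $a = a_0$) and then apply the multiplication rules (HW1)--(HW3). Both sides of each asserted equality are then elements of the spanning set $\{a_i \mid i \in \Z\} \cup \{\sg_j \mid j \in \Z_+\}$, so it suffices to compare coefficients. Throughout I would use the standing convention $\sg_0 = 0$ together with the resulting $c_0 = 2a - a_0 - a_0 = 0$, which lets the case $i = j$ be handled uniformly with the generic case: there $|i-j| = 0$ simply contributes nothing.

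For the second identity I would write $c_i\sg_j = 2a\sg_j - a_{-i}\sg_j - a_i\sg_j$ and expand each term via (HW2). The three contributions to the $\sg_j$-coefficient are $2\cdot\frac{3}{2}$, $-\frac{3}{2}$ and $-\frac{3}{2}$, which cancel, so $c_i\sg_j$ lies entirely in the $a$-part of $HW$, equal to $-\frac{3}{2}a + \frac{3}{4}(a_{-i}+a_i) + \frac{3}{4}(a_{-j}+a_j) - \frac{3}{8}(a_{-i-j}+a_{-i+j}+a_{i-j}+a_{i+j})$. Independently I would expand $\frac{3}{8}c_{i,j}$ by substituting $c_k = 2a - (a_{-k}+a_k)$ into $c_{i,j} = -2c_i - 2c_j + c_{|i-j|} + c_{i+j}$, using that $\{-|i-j|,\,|i-j|\} = \{i-j,\,j-i\}$ so that $c_{|i-j|}$ contributes $a_{i-j}$ and $a_{j-i}$ symmetrically. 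Matching the two resulting axis combinations term by term gives the equality.

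For the first identity I would expand the product of the two differences $(2a - a_{-i} - a_i)(2a - a_{-j} - a_j)$ by bilinearity into nine products, each evaluated through (HW1) together with $a^2 = a$. Collecting the axis coefficients, every $a_k$ cancels; for instance $a$ receives $4$ from $a^2$ and $-1$ from each of the four mixed products $a\,a_{\pm j}$ and $a_{\pm i}\,a$, giving $0$, and each $a_{\pm i}$, $a_{\pm j}$ receives $-1$ from its product with $a$ and $+\frac12$ from each of two further products. Meanwhile the $\sg$-terms assemble into $-4\sg_i - 4\sg_j + 2\sg_{|i-j|} + 2\sg_{i+j} = 2\sg_{i,j}$. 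The only real care required anywhere is the index bookkeeping under (HW1), since $a_{-i}a_j$ and $a_i a_{-j}$ produce $\sg_{i+j}$ while $a_{-i}a_{-j}$ and $a_i a_j$ produce $\sg_{|i-j|}$; matching these absolute-value evaluations is the one place where a sign slip could occur, but the computation is otherwise entirely mechanical, which is why I expect no substantive obstacle.
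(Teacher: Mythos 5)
Your proposal is correct and follows essentially the same route as the paper's own proof: direct bilinear expansion of $c_ic_j$ and $c_i\sg_j$ via (HW1)--(HW2), observing that all axis terms cancel in the first product and the $\sg_j$ terms cancel in the second, then matching the remainder against $2\sg_{i,j}$ and $\frac{3}{8}c_{i,j}$. Your explicit treatment of the $i=j$ case via the conventions $\sg_0=0$ and $c_0=0$ is a small point of added care that the paper leaves implicit.
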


\begin{proof}
Using (HW1), $c_ic_j=(2a-(a_{-i}+a_i))(2a-(a_{-j}+a_j))=4a-2(\frac{1}{2}a_{-i}+\frac{1}{2}a+
\sg_i+\frac{1}{2}a_i+\frac{1}{2}a+\sg_i)-2(\frac{1}{2}a+\frac{1}{2}a_{-j}+\sg_j+\frac{1}{2}a+
\frac{1}{2}a_j+\sg_j)+(\frac{1}{2}a_{-i}+\frac{1}{2}a_{-j}+\sg_{|i-j|}+\frac{1}{2}a_i+
\frac{1}{2}a_{-j}+\sg_{i+j}+\frac{1}{2}a_{-i}+\frac{1}{2}a_j+\sg_{i+j}+\frac{1}{2}a_i+
\frac{1}{2}a_j+\sg_{|i-j|})$. Here all $a$, $a_{\pm i}$, $a_{\pm j}$ magically cancel yielding 
the first claim.

Similarly, using (HW2), $c_i\sg_j=(2a-(a_{-i}+a_i))\sg_j=2(-\frac{3}{4}a+\frac{3}{8}(a_{-j}+a_j)+
\frac{3}{2}\sg_j)-(-\frac{3}{4}a_{-i}+\frac{3}{8}(a_{-i-j}+a_{-i+j})+\frac{3}{2}\sg_j-
\frac{3}{4}a_i+\frac{3}{8}(a_{i-j}+a_{i+j})+\frac{3}{2}\sg_j)$. Here the $\sg_j$ cancel and the 
second claim follows after the terms are rearranged.
\end{proof}

Manifestly, all these products have a very uniform structure. Note that both $c_{i,j}$ and 
$\sg_{i,j}$ are symmetric in $i$ and $j$. In particular, it follows that $c_i\sg_j=\frac{3}{8}c_{i,j}=
\frac{3}{8}c_{j,i}=c_j\sg_i$. Another interesting consequence of this lemma is that the involution $\rho$ 
negating all $c_j$ and fixing all $\sg_j$ is an automorphism of $V$. We will however need 
another involution, and for this we need to compute products for the other basis, the vectors 
$u_j$ and $v_j$. 

Similarly to the above, we introduce $u_{i,j}:=-2u_i-2u_j+u_{|i-j|}+u_{i+j}$ and $v_{i,j}:=
-2v_i-2v_j+v_{|i-j|}+v_{i+j}$. Lemma \ref{transition} gives us the following.

\begin{lemma} \label{double transition}
For all $i,j\in\Z_+$, we have $u_{i,j}=3c_{i,j}+4\sg_{i,j}$ and $v_{i,j}=c_{i,j}-4\sg_{i,j}$.
\end{lemma}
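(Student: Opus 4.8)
The plan is to exploit the fact that the bracket operation is \emph{linear} in the indexed family to which it is applied. Concretely, for any family $(x_k)_{k\in\Z_{\geq 0}}$ the quantity $x_{i,j}=-2x_i-2x_j+x_{|i-j|}+x_{i+j}$ is a single fixed $\F$-linear combination of the four members $x_i$, $x_j$, $x_{|i-j|}$, $x_{i+j}$. Hence if two families satisfy a term-by-term relation $x_k=p\,y_k+q\,z_k$ for all $k$, then the same relation passes to their brackets: $x_{i,j}=p\,y_{i,j}+q\,z_{i,j}$. Since Lemma~\ref{transition} records exactly such term-by-term relations, namely $u_k=3c_k+4\sg_k$ and $v_k=c_k-4\sg_k$, the two desired identities will follow immediately once this linearity principle is in place.

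Before applying it, I would first verify that the relations of Lemma~\ref{transition} persist at the index $k=0$; this is needed because $|i-j|=0$ precisely when $i=j$. Recalling that $a=a_0$ and the convention $\sg_0=0$, one computes $c_0=2a-(a_0+a_0)=0$, $u_0=6a-3(a_0+a_0)+4\sg_0=0$, and $v_0=2a-(a_0+a_0)-4\sg_0=0$. Thus all four families vanish at $0$ and both relations degenerate harmlessly to $0=0$ there, so they in fact hold for every $k\in\Z_{\geq 0}$.

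With this in hand the proof reduces to a single substitution. Replacing each $u_k$ in $u_{i,j}=-2u_i-2u_j+u_{|i-j|}+u_{i+j}$ by $3c_k+4\sg_k$ and collecting terms, the $c$-coefficients reassemble into $c_{i,j}$ and the $\sg$-coefficients into $\sg_{i,j}$, yielding $u_{i,j}=3c_{i,j}+4\sg_{i,j}$; the computation for $v_{i,j}$ is identical, with coefficients $1$ and $-4$ replacing $3$ and $4$. I do not anticipate any real obstacle here: the statement is pure linearity, and the only point demanding a moment's care is the degenerate index $0$ arising from the case $i=j$.
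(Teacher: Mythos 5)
Your proof is correct and is essentially the paper's own argument: the paper derives Lemma~\ref{double transition} directly from Lemma~\ref{transition} by exactly this linearity-of-the-bracket observation, offering no further detail. Your explicit check that $c_0=u_0=v_0=0$ (so the relations degenerate harmlessly when $i=j$) is a point the paper leaves implicit in its convention $\sg_0=0$, and it is a welcome addition rather than a deviation.
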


Now we compute the products of the vectors $u_j$ and $v_j$.

\begin{lemma} \label{products u v}
For all $i,j\in\Z_+$, we have that $u_iu_j=3u_{i,j}$, $u_iv_j=-3v_{i,j}$, and $v_iv_j=-u_{i,j}$.
\end{lemma}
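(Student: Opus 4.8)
The plan is to pass everything into the $c$--$\sg$ basis, where all the products are already known, and then recognise the answer back in the $u$--$v$ basis. Concretely, Lemma~\ref{transition} lets me write each generator as $u_k=3c_k+4\sg_k$ and $v_k=c_k-4\sg_k$, so each of the three products $u_iu_j$, $u_iv_j$, $v_iv_j$ expands bilinearly into a combination of the four products $c_ic_j$, $c_i\sg_j$, $\sg_ic_j$, and $\sg_i\sg_j$. The first of these is handled by Lemma~\ref{products c sigma} ($c_ic_j=2\sg_{i,j}$), the middle two by the same lemma together with the symmetry $c_i\sg_j=c_j\sg_i=\frac38 c_{i,j}$ already observed after it, and the last by (HW3) restated as $\sg_i\sg_j=-\frac38\sg_{i,j}$.

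After substituting, I would collect the coefficients of the two building blocks $c_{i,j}$ and $\sg_{i,j}$. For instance, for the first product one finds
$$u_iu_j=9\,c_ic_j+24\,c_i\sg_j+16\,\sg_i\sg_j=18\,\sg_{i,j}+9\,c_{i,j}-6\,\sg_{i,j}=9\,c_{i,j}+12\,\sg_{i,j},$$
and the analogous bookkeeping gives $u_iv_j=-3\,c_{i,j}+12\,\sg_{i,j}$ and $v_iv_j=-3\,c_{i,j}-4\,\sg_{i,j}$. The final step is purely formal: Lemma~\ref{double transition} says $u_{i,j}=3c_{i,j}+4\sg_{i,j}$ and $v_{i,j}=c_{i,j}-4\sg_{i,j}$, so reading off the coefficients identifies $9c_{i,j}+12\sg_{i,j}=3u_{i,j}$, $-3c_{i,j}+12\sg_{i,j}=-3v_{i,j}$, and $-3c_{i,j}-4\sg_{i,j}=-u_{i,j}$, which are exactly the three claimed identities.

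There is no genuine obstacle here beyond careful arithmetic; the only point demanding attention is the symmetry $c_i\sg_j=c_j\sg_i$, which is what lets the two mixed terms be merged into a single multiple of $c_{i,j}$ rather than producing $c_{i,j}$ and $c_{j,i}$ separately. Everything else is linear algebra in the two-dimensional span of $c_{i,j}$ and $\sg_{i,j}$, and the \emph{uniform} two-term shape of the products in Lemma~\ref{products c sigma} is precisely what guarantees each answer lands back in this span and hence is recognisable via Lemma~\ref{double transition}. I would therefore present the proof as the three short expansions above, remarking that the $c$-- and $\sg$--coefficients match those of $3u_{i,j}$, $-3v_{i,j}$, and $-u_{i,j}$ respectively.
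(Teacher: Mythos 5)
Your proposal is correct and is essentially the paper's own proof: expand $u_i$, $v_i$ in the $c$--$\sg$ basis via Lemma~\ref{transition}, evaluate the products using Lemma~\ref{products c sigma} and (HW3), and recognise the result through Lemma~\ref{double transition}. The only cosmetic difference is that you merge the two mixed terms via $c_i\sg_j=c_j\sg_i$ before substituting, whereas the paper substitutes $\frac{3}{8}c_{i,j}$ into each of them separately; the arithmetic in both cases agrees.
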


\begin{proof}
By Lemma \ref{transition}, $u_iu_j=(3c_i+4\sg_i)(3c_j+4\sg_j)=9c_ic_i+12c_i\sg_j+12\sg_ic_j+
16\sg_i\sg_j$. Using Lemma \ref{products c sigma}, this is $9(2\sigma_{i,j})+
12(\frac{3}{8}c_{i,j})+12(\frac{3}{8}c_{i,j})+16(-\frac{3}{8}\sg_{i,j})=9c_{i,j}+12\sg_{i,j}=
3u_{i,j}$. For the last step, we used Lemma \ref{double transition}.

Similarly, $u_iv_j=(3c_i+4\sg_i)(c_j-4\sg_j)=3c_ic_j+4\sg_ic_j-12c_j\sg_j-16\sg_i\sg_j=
3(2\sg_{i,j})+4(\frac{3}{8}c_{i,j})-12(\frac{3}{8}c_{i,j})-16(-\frac{3}{8}\sg_{i,j})=
-3c_{i,j}+12\sg_{i,j}=-3v_{i,j}$ and $v_iv_j=(c_i-4\sg_i)(c_j-4\sg_j)=c_ic_j-4\sg_ic_j-4c_i\sg_j+ 
16\sg_i\sg_j=2\sg_{i,j}-4(\frac{3}{8}c_{i,j})-4(\frac{3}{8}c_{i,j})+16(-\frac{3}{8}\sg_{i,j})=
-3c_{i,j}-4\sg_{i,j}=-u_{i,j}$.
\end{proof}

Just like Lemma \ref{products c sigma} implied that $V$ admits an involution $\rho$, this lemma 
gives us that the involution $\theta$ fixing every $u_j$ and negating every $v_j$ is an 
automorphism of $V$. 

We are now ready to establish the fusion law for the axis $a$.

\bigskip\noindent
{\it Proof of Theorem~\ref{HW}.}
Recall that, when $\chr(\F)\neq 3$, we have that $\langle a\rangle=HW_1$, $HW_u=HW_0$, 
$HW_v=HW_2$, and $HW_w=HW_{\frac{1}{2}}$ are the eigenspaces of $\ad_a$. So we need to see how 
these subspaces behave under the product in $HW$.

First of all, clearly, $\langle a\rangle HW_u=0$, $\langle a\rangle HW_v=HW_v$ and $\langle 
a\rangle HW_w=HW_w$. Now we focus on the products of the $u$-, $v$-, and $w$-parts, which are 
all contained in $J$, and so their products are also contained in $J$, avoiding $\langle 
a\rangle$. Furthermore, the involution $\tau$ induces a grading on $J$, under which 
$V=HW_u\oplus HW_v$ is the even part and $H_w$ is the odd part. Hence $HW_uHW_w$ and $HW_vHW_w$ 
are contained in $HW_w$, while $HW_wHW_w\subseteq V=HW_u\oplus HW_v$. It remains to check the 
products within $V$. Similarly, the involution $\theta$ induces a grading on $V$, under which 
$HW_u$ is even and $HW_v$ is odd. Hence $HW_uHW_u\subseteq HW_u$, $HW_uHW_v\subseteq HW_v$, and 
$HW_vHW_v\subseteq HW_u$. (Of course, these three inclusions can be seen directly from Lemma 
\ref{products u v}.) Hence, when $\chr(\F)\neq 3$, $a$ satisfies the fusion law of 
Table~\ref{HWtable}, which is slightly stricter than the fusion law $\M(2,\frac{1}{2})$. 
\begin{table}
$$ 
\begin{array}{|c||c|c|c|c|}
\hline
\star & 1 & 0 & 2 & \frac{1}{2}\\
\hline
\hline
1 & 1 & & 2 & \frac{1}{2}\\
\hline
0 & & 0 & 2 & \frac{1}{2}\\
\hline
2 & 2 & 2 & 0 & \frac{1}{2}\\
\hline
\frac{1}{2} & \frac{1}{2} & \frac{1}{2} & \frac{1}{2} & 0,2\\
\hline
\end{array}
$$
\caption{Fusion law for $HW$}\label{HWtable}
\end{table} 

Since $D$ is transitive on the $a_i$, all of them are axes satisfying the law in Table 
\ref{HWtable}. To complete the proof of Theorem \ref{HW}, it remains to show that 
$HW=\langle\!\langle a_0,a_1\rangle\!\rangle$. Let $H:=\langle\!\langle 
a_0,a_1\rangle\!\rangle$. Note that $\sg_1=a_0a_1-\frac{1}{2}(a_0+a_1)\in H$. Also, 
$\frac{3}{8}a_{-1}=a_0\sg_1+\frac{3}{4}a_0-\frac{3}{8}a_1-\frac{3}{2}\sg_1\in H$. Assuming that 
$\chr(\F)\neq 3$, this gives us that $a_{-1}\in H$. Clearly, $=\langle\!\langle 
a_0,a_1\rangle\!\rangle$ is invariant under the involution $\pi\in D$ switching $a_0$ and $a_1$. 
Now we also see that $H=\langle\!\langle a_{-1},a_0,a_1\rangle\!\rangle$ is invariant under the 
involution $\tau\in D_a$. Since $D=\langle\tau,\pi\rangle$, this makes $H$ invariant under all 
of $D$, and so $H$ contains all axes $a_i$. Clearly, this means that $H=HW$. This completes the 
proof of Theorem \ref{HW}.\hfill$\square$

\bigskip
Let us discuss what happens when $\chr(\F)=3$. The fusion that we established for the 
decomposition $HW=\langle a\rangle\oplus HW_u\oplus HW_v\oplus HW_w$ remains true, but it 
becomes even stricter because so many coefficients are multiples of $3$. For example, 
$HW_uHW_u=0=HW_uHW_v$. On the other hand, $2=\frac{1}{2}$ in characteristic $3$, so we need to 
merge $HW_v$ and $HW_w$ into a single eigenspace $HW_{\frac{1}{2}}=HW_v\oplus HW_w$. Let us see 
what fusion we get inside this eigenspace. We already know that $HW_vHW_v\subseteq HW_u$.

\begin{lemma}
If $\chr(\F)=3$ then $HW_vHW_w=0$ and $HW_wHW_w\subseteq HW_u$.
\end{lemma}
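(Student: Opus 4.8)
The plan is to prove both equalities by a direct computation using the product formulas from Lemma \ref{products u v}, specialized to characteristic $3$. The key observation is that we already have closed expressions for all products among the $u_j$ and $v_j$, so what remains is purely to read off which coefficients vanish modulo $3$. The subtlety is that $HW_w=\langle w_j\mid j\in\Z_+\rangle$ is spanned by the $w_j=a_{-j}-a_j$, which are not among the vectors $u_j,v_j$ covered by Lemma \ref{products u v}; hence I will first need product formulas involving the $w_j$.

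First I would compute the products $v_i w_j$ and $w_i w_j$ directly from the defining relations (HW1)--(HW3), in the same style as Lemmas \ref{products c sigma} and \ref{products u v}. Since $w_j=a_{-j}-a_j$ and $v_i=2a-(a_{-i}+a_i)-4\sg_i$, I would expand $v_i w_j$ using (HW1) and (HW2): the terms $a\,w_j$, $(a_{-i}+a_i)w_j$ and $\sg_i w_j$ each produce combinations of basis elements $a_k$ and $\sg_\ell$. The expectation, by the same telescoping that made the earlier products uniform, is that $v_i w_j$ lands in $HW_w$ in general characteristic, with all its coefficients carrying an overall factor that is a multiple of $3$; setting $\chr(\F)=3$ then gives $v_i w_j=0$, hence $HW_vHW_w=0$. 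This is consistent with the fusion-law bookkeeping already noted: in characteristic $3$ so many structure constants become multiples of $3$ that several products collapse to zero.

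Next I would compute $w_i w_j$ directly. Expanding $(a_{-i}-a_i)(a_{-j}-a_j)$ via (HW1), the diagonal $\frac{1}{2}(a_\cdot+a_\cdot)$ contributions cancel in pairs because of the sign pattern, leaving a combination of the form $\sg_{|i-j|}-\sg_{i\pm j}$ type terms, i.e.\ an element of the $uvw$-part that is symmetric under $\tau$ and therefore lies in $V=HW_u\oplus HW_v$. To pin down that it lies in $HW_u$ rather than having a $v$-component, I would re-express the resulting $\sg$-combination in the $c_j,\sg_j$ basis and then in the $u_j,v_j$ basis using Lemma \ref{transition}; in characteristic $3$ the $v$-part should drop out because, as already recorded, $HW_v=HW_2$ merges with $HW_w$ and the relevant coefficient is divisible by $3$. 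Concretely, one shows $w_iw_j$ is a scalar multiple of $u_{i,j}$ (possibly plus a $v_{i,j}$ term that vanishes mod $3$), giving $HW_wHW_w\subseteq HW_u$.

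The main obstacle I expect is the $w$-products themselves, since the $w_j$ are the one family not already handled by the uniform machinery of Lemmas \ref{products c sigma}--\ref{products u v}; everything hinges on carrying out these two expansions carefully and correctly tracking the absolute value $|i-j|$ when $i=j$ (where $w_i w_i$ involves $\sg_0=0$). Once the general-characteristic formulas for $v_iw_j$ and $w_iw_j$ are in hand, the reduction modulo $3$ is immediate and both claims follow at once, so the only real work is the bookkeeping in the two direct computations.
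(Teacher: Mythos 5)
Your proposal is correct and takes essentially the same route as the paper: a direct expansion of $v_iw_j$ and $w_iw_j$ from (HW1)--(HW3), in which the $a$-terms cancel and the surviving $\sg$-combination $2\sg_{|i-j|}-2\sg_{i+j}$ is placed inside $HW_u$ via the relation between $\sg_k$ and $u_k$ (whose $v$-coefficient is divisible by $3$). The only cosmetic difference is that the paper specializes to characteristic $3$ from the outset, using $\sg_i w_j=0$ to kill the third term of $v_iw_j$ immediately, whereas you compute in general characteristic and reduce modulo $3$ at the end; both computations are the same bookkeeping.
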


\begin{proof}
Indeed, for $i,j\in\Z_+$, $v_iw_j=(2a-(a_{-i}+a_i)-4\sg_i)(a_{-j}-a_j)$. Taking into account 
that $aw_j=\frac{1}{2}w_j$ and that $\sg_iw_h=0$ when $\chr(\F)=3$ (see (HW2)), we obtain that 
$v_iw_j=(a_{-j}-a_j)-(a_{-i}+a_i)(a_{-j}-a_j)=(a_{-j}-a_j)-(\frac{1}{2}(a_{-i}+a_{-j})+
\sg_{|i-j|})+(\frac{1}{2}(a_i+a_{-j})+\sg_{i+j})-(\frac{1}{2}(a_{-i}+a_j)+\sg_{i+j})+
(\frac{1}{2}(a_i+a_j)+\sg_{|i-j|})=0$, because everything cancels here.

Also, $w_iw_j=(a_{-i}-a_i)(a_{-j}-a_j)=(\frac{1}{2}(a_{-i}+a_{-j})+\sg_{|i-j|})-
(\frac{1}{2}(a_i+a_{-j})+\sg_{i+j})-(\frac{1}{2}(a_{-i}+a_j)+\sg_{i+j})+
(\frac{1}{2}(a_i+a_j)+\sg_{|i-j|})=2\sg_{|i-j|}-2\sg_{i+j}=\frac{1}{2}u_{|i-j|}-
\frac{1}{2}u_{i+j}$. The last equality holds because $u_k=4\sg_k$ when $\chr(\F)=3$ (see 
(\ref{uvw})).
\end{proof}

So we can see now that in characteristic $3$ the axes $a_i$ in $HW$ satisfy the fusion law in 
Table \ref{HWtable3}.
\begin{table}
$$ 
\begin{array}{|c||c|c|c|}
\hline
\star & 1 & 0 & \frac{1}{2}\\
\hline
\hline
1 & 1 & & \frac{1}{2}\\
\hline
0 & & & \frac{1}{2}\\
\hline
\frac{1}{2} & \frac{1}{2} & \frac{1}{2} & 0\\
\hline
\end{array}
$$
\caption{Fusion law for $HW$ in characteristic $3$}\label{HWtable3}
\end{table} 
In particular, $HW$ is an algebra of Jordan type $\frac{1}{2}$. Let us prove that, in fact, 
$HW$ is a Jordan algebra when $\chr(\F)=3$. For this we need to verify the Jordan identity
$x(yx^2)=(xy)x^2$
for all $x,y\in HW$. For an element 
$$x=\sum_{i\in\Z}r_ia_i+\sum_{j\in \Z_+}s_j\sg_j\in HW,$$ 
we call $a(x):=\sum_{i\in\Z}r_ia_i$ the $a$-part of $x$ and we call $\sg(x):=\sum_{j\in 
\Z_+}s_j\sg_j$ the $\sg$-part of $x$. According to (HW2) and (HW3), when $\chr(\F)=3$, we have 
$\sg_jHW=0$ for all $j\in\Z_+$. In particular, we have that $xy=a(x)a(y)$.

\begin{lemma}
If $x,y\in HW$, 
with $x=a(x)$ and $y=a(y)$,
then $a(xy)=\frac{\lm(y)}{2}x+\frac{\lm(x)}{2}y$.
\end{lemma}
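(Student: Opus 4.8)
The plan is to exploit bilinearity in order to reduce the claim to the basis elements. Both sides of the asserted identity are bilinear in $x$ and $y$: the left-hand side $a(xy)$ is the composition of the (bilinear) algebra product with the linear projection $a(\cdot)$ onto the $a$-part of $HW$, while the right-hand side $\frac{\lm(y)}{2}x+\frac{\lm(x)}{2}y$ is bilinear because $\lm$ is linear (indeed an algebra homomorphism by Lemma~\ref{lambda}). Moreover, the hypotheses $x=a(x)$ and $y=a(y)$ mean precisely that $x$ and $y$ have no $\sg$-part, so we may write $x=\sum_{i\in\Z}r_ia_i$ and $y=\sum_{i\in\Z}t_ia_i$ as $\F$-combinations of the $a_i$ alone. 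Hence it suffices to verify the identity when $x=a_i$ and $y=a_j$ for arbitrary $i,j\in\Z$.

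For such basis elements the key input is (HW1), namely $a_ia_j=\frac12(a_i+a_j)+\sg_{|i-j|}$. Applying the projection $a(\cdot)$ simply discards the $\sg_{|i-j|}$ term, giving $a(a_ia_j)=\frac12(a_i+a_j)$. On the other side, since $\lm(a_i)=\lm(a_j)=1$, the right-hand side evaluates to $\frac{\lm(a_j)}{2}a_i+\frac{\lm(a_i)}{2}a_j=\frac12 a_i+\frac12 a_j$. The two expressions coincide, so the identity holds on every basis pair, and by the bilinear reduction it holds for all $x,y$ with $x=a(x)$ and $y=a(y)$.

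I do not anticipate a genuine obstacle here; once the bilinear reduction is set up, the remaining content is a one-line computation. The only point deserving a word of care is the justification that $a(\cdot)$ is a well-defined linear map, namely the projection associated with the direct sum decomposition $HW=\bigoplus_{i\in\Z}\F a_i\oplus\bigoplus_{j\in\Z_+}\F\sg_j$; this is what guarantees that $a(xy)$ is bilinear in $x,y$ and thus legitimises restricting attention to the pairs $(a_i,a_j)$. I would also remark that the characteristic plays no role in this particular statement: the argument uses only (HW1) and the values $\lm(a_i)=1$, and would go through verbatim in any characteristic other than $2$, even though the surrounding discussion is specialised to $\chr(\F)=3$.
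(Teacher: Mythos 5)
Your proof is correct and is essentially the paper's own argument: the paper writes $x=\sum_i r_ia_i$, $y=\sum_j t_ja_j$ and expands $a(xy)$ as a double sum via (HW1), which is exactly your bilinear reduction to the basis pairs $(a_i,a_j)$ carried out explicitly. Your observation that the statement is characteristic-independent is also accurate; nothing beyond (HW1), linearity of $a(\cdot)$ and $\lm(a_i)=1$ is used.
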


\begin{proof}
Indeed, let $x=\sum_{i\in\Z}r_ia_i$ and $y=\sum_{i\in\Z}t_ia_i$. Note that $\lm(x)=\sum_{i\in 
Z}r_i$ and $\lm(y)=\sum_{i\in\Z}t_i$. Now we have from (HW1) that
\begin{align*}
a(xy)&=\sum_{i,j\in\Z}r_it_j\frac{1}{2}(a_i+a_j)\\
     &=\sum_{i,j\in\Z}r_it_j\frac{1}{2}a_i+\sum_{i,j\in\Z}r_it_j\frac{1}{2}a_j\\
     &=\frac{1}{2}(\sum_{j\in\Z}t_j)(\sum_{i\in\Z}r_ia_i)+\frac{1}{2}(\sum_{i\in\Z}r_i)
     (\sum_{j\in\Z}t_ja_j)\\
     &=\frac{\lm(y)}{2}x+\frac{\lm(x)}{2}y.
\end{align*}
\end{proof}

In particular, this lemma implies that $a(x^2)=\lm(x)a(x)$ for all $x\in HW$.

\begin{theorem}
\label{Jord}
If $\chr(\F)=3$ then $HW$ is a Jordan algebra.
\end{theorem}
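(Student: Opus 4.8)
The goal is to verify the Jordan identity $x(yx^2) = (xy)x^2$ for all $x,y \in HW$ when $\chr(\F)=3$. The crucial simplification, already recorded in the excerpt, is that $\sg_j HW = 0$ for all $j\in\Z_+$, so that for any $u,v\in HW$ the product depends only on their $a$-parts: $uv = a(u)a(v)$. Consequently, in every product appearing in the Jordan identity, all $\sg$-parts are inert, and I may replace $x$ and $y$ by their $a$-parts $a(x)$ and $a(y)$ from the outset. The plan is therefore to reduce the identity to the subalgebra of $a$-parts and exploit the two linear formulas we have: $uv = a(u)a(v)$, and the preceding lemma giving $a(uv) = \frac{\lm(v)}{2}\,a(u) + \frac{\lm(u)}{2}\,a(v)$ whenever $u=a(u)$ and $v=a(v)$, together with its corollary $a(x^2) = \lm(x)\,a(x)$.

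First I would assume without loss of generality that $x = a(x)$ and $y = a(y)$, since passing to $a$-parts changes none of the four products in the identity. Then I compute $x^2$; by the corollary, $a(x^2) = \lm(x)\,x$, while $x^2$ itself also has a $\sg$-part, but that $\sg$-part will be annihilated in the next multiplication. Explicitly, for any $z\in HW$ with $z = a(z)$, the product $z x^2$ equals $a(z)\,a(x^2) = z\cdot(\lm(x)\,x) = \lm(x)\,(zx)$, using that $x^2$ may be replaced by its $a$-part $\lm(x)x$ inside a product. Thus multiplication by $x^2$ acts, on elements equal to their own $a$-part, simply as $\lm(x)$ times multiplication by $x$.

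Applying this with $z = y$ gives $y x^2 = \lm(x)\,(yx)$, and applying it with $z = xy$ — noting that $xy = a(x)a(y)$ has an $a$-part to which the same rule applies — gives $(xy)x^2 = \lm(x)\,\big((xy)x\big)$. Hence both sides of the Jordan identity reduce, after pulling out the common scalar $\lm(x)$, to comparing $x\big((yx)\big)$-type products. More precisely, the left side is $x(yx^2) = x\big(\lm(x)(yx)\big) = \lm(x)\,x(yx)$, and the right side is $(xy)x^2 = \lm(x)\,(xy)x$. Since the algebra is commutative, $x(yx) = (xy)x$ is automatic, so the two sides coincide. The key mechanism is that, modulo the annihilating $\sg$-parts, multiplication by $x^2$ collapses to a scalar multiple of multiplication by $x$, after which commutativity finishes the job.

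The step I expect to require the most care is the bookkeeping of $\sg$-parts: one must confirm that replacing $x^2$ by its $a$-part $\lm(x)x$ inside a product is legitimate, which rests precisely on $\sg_j HW = 0$ in characteristic $3$, and that replacing $x,y$ by their $a$-parts at the start is harmless for all four occurrences. Once these reductions are justified, no genuine computation with the structure constants (HW1)--(HW3) remains, and the identity follows from commutativity alone. I would present the argument in this order: reduce to $a$-parts; establish that $zx^2 = \lm(x)(zx)$ for $z=a(z)$; apply it twice; invoke commutativity.
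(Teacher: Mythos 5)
Your proposal is correct and takes essentially the same approach as the paper's proof: reduce all products to $a$-parts using $\sigma_j HW=0$, invoke $a(x^2)=\lambda(x)a(x)$ so that multiplication by $x^2$ becomes $\lambda(x)$ times multiplication by $x$, and finish by commutativity. The paper's argument is simply a more compressed version of this same chain of equalities.
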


\begin{proof}
For $x,y\in HW$, we have  
$$x(yx^2)=a(x)(a(y)a(x^2))
=a(x)(a(y)\lm(x)a(x))=\lm(x)a(x)(a(y)a(x))=
$$
$$
(a(x)a(y))\lm(x)a(x)=
(a(x)a(y))a(x^2)=(xy)x^2.$$ Note that we used that 
$HW$ is commutative.
\end{proof}

Note also that with the same argument used for Theorem~\ref{Jord} one can prove the 
following more general result.

\begin{proposition} \label{Jord 2}
Every commutative baric algebra $B$, with weight function $\om$, such that
\begin{enumerate}
\item[\rm(a)] $B=A\oplus I$, where $A$ is a subspace and $I$ is an ideal,
\item[\rm(b)] $IB=0$, and
\item[\rm(c)] for all $a,b\in A$, $ab-\frac{1}{2}(\om(b)a+\om(a)b)\in I$, 
\end{enumerate}
is a Jordan algebra. 
\end{proposition}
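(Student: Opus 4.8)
The plan is to follow the proof of Theorem~\ref{Jord} essentially verbatim, with the concrete data of the highwater algebra replaced by the abstract hypotheses: the decomposition $HW=\langle a\rangle\oplus HW_u\oplus HW_v\oplus HW_w$ becomes $B=A\oplus I$, and the weight $\lm$ becomes $\om$. Writing $a(x)\in A$ and $i(x)\in I$ for the two components of $x$ under (a), I would reduce the whole verification to two elementary facts, both of which are exactly the abstract shadows of what was used for $HW$.

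First I would show that hypothesis (b) forces every product to depend only on the $A$-parts of its factors. For $u,v\in B$ we have $u-a(u)\in I$, so $uv=a(u)v$ since $(u-a(u))v\in IB=0$; applying this to the second factor as well (with $BI=IB=0$ by commutativity) gives $uv=a(u)a(v)$. In other words, inside any product each factor may be replaced by its $A$-part. Second, I would apply (c) with $a(x)$ in both slots to get $a(x)^2-\om(a(x))a(x)\in I$; since every element of $I$ annihilates $B$, multiplying by an arbitrary $z\in B$ destroys this $I$-term, whence $z\cdot a(x)^2=\om(a(x))\,z\,a(x)$. I would also note in passing that $\om$ vanishes on $I$---for $i\in I$ one has $i^2\in IB=0$, so $\om(i)^2=\om(i^2)=0$---which gives $\om(a(x))=\om(x)$, matching the identity $\lm(x)=\lm(a(x))$ used for $HW$.

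With these two facts the Jordan identity $x(yx^2)=(xy)x^2$ unwinds just as in Theorem~\ref{Jord}. Using $x^2=a(x)^2$ together with the second fact to extract the scalar $\om(x)$, and then replacing every surviving factor by its $A$-part via the first fact, I would obtain
$$x(yx^2)=\om(x)\,a(x)\big(a(y)a(x)\big),\qquad (xy)x^2=\om(x)\,\big(a(x)a(y)\big)a(x).$$
By commutativity both bracketed expressions equal $a(x)\big(a(x)a(y)\big)$, so the two sides agree for all $x,y\in B$; as $B$ is commutative, it is therefore a Jordan algebra.

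I expect no serious obstacle: the real content is simply that hypotheses (b) and (c) collapse the otherwise genuinely non-associative Jordan identity into a statement about $A$-parts that holds by commutativity alone. The two points that merit a little care are that the reduction to $A$-parts be invoked correctly inside the \emph{nested} products $x(yx^2)$ and $(xy)x^2$---which is precisely what the first fact licenses---and that the identity be checked for \emph{all} $x,y\in B$ and not only for $x,y\in A$; the uniform slogan ``every factor may be replaced by its $A$-part'' disposes of the latter concern automatically.
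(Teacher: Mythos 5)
Your proof is correct and is essentially the paper's own argument: the paper proves Proposition~\ref{Jord 2} by simply repeating the chain of equalities from Theorem~\ref{Jord}, and your two facts (every factor in a product may be replaced by its $A$-part, thanks to (b), and $a(x)^2-\om(a(x))a(x)\in I$ is killed by any further multiplication, thanks to (c)) are exactly the abstract counterparts of $xy=a(x)a(y)$ and $a(x^2)=\lm(x)a(x)$ used there. Your side remark that $\om$ vanishes on $I$ (since $i^2\in IB=0$) is also correct and tidily justifies writing $\om(x)$ in place of $\om(a(x))$.
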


Conversely, take arbitrary vector spaces $A$ and $I$ over $\F$ (of characteristic not $2$), 
an arbitrary linear map $\om:A\to\F$, and an arbitrary symmetric bilinear map $\sg:A\times 
A\to I$. Define the algebra product on $B=A\oplus I$ by $BI=0$ and $ab:=
\frac{1}{2}(\om(b)a+\om(a)b)+\sg(a,b)$ for $a,b\in A$. Then this algebra $B$ satisfies the 
assumptions of Proposition \ref{Jord 2} and hence $B$ is a baric Jordan algebra. Note that 
$\om$ will be the weight function of $B$ if we extend it to $B$ via $\om(I)=0$. (Note that 
this may not be the only choice for the weight function.)

Manifestly, $HW$ in characteristic $3$ can be obtained by this construction with $\om=\lm$.

Now that we have proved our main results, let us tie the loose ends. 

\begin{proposition}
\begin{itemize}
\item[\rm(a)] The form $(\cdot,\cdot)$ is, up to a scalar factor, the only Frobenius form on 
$HW$.
\item[\rm(b)] Every proper ideal of $HW$ is contained in $J$, which is the radical of $HW$.
\end{itemize}
\end{proposition}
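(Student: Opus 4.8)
The plan is to handle the two parts in turn, in both cases exploiting the $\ad_a$-eigenspace decomposition and the transitivity of $D$ on the axes.

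\emph{Part (a).} Let $f$ be any Frobenius form on $HW$, that is, a symmetric bilinear form associating with the product. The starting point is the standard observation that, since $f$ associates and $HW$ is commutative, eigenspaces of $\ad_{a_0}$ belonging to distinct eigenvalues are $f$-orthogonal: if $a_0x=\lm x$ and $a_0y=\mu y$ then $\lm f(x,y)=f(a_0x,y)=f(x,a_0y)=\mu f(x,y)$, which forces $f(x,y)=0$ when $\lm\neq\mu$. Writing $t:=f(a_0,a_0)$, I would apply this to the three eigenvectors $u_j,v_j,w_j$ of (\ref{uvw}), each of which is orthogonal to $a_0\in HW_1$. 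Orthogonality to $w_j$ gives $f(a_0,a_{-j})=f(a_0,a_j)$, and orthogonality to $u_j$ and $v_j$ then yields a $2\times2$ linear system in $f(a_0,a_j)$ and $f(a_0,\sg_j)$ whose unique solution (using $\chr(\F)\neq2$) is $f(a_0,a_j)=t$, $f(a_0,\sg_j)=0$. Re-running the identical argument with $a_0$ replaced by an arbitrary axis $a_n$ shows $f(a_n,a_k)=f(a_n,a_n)$ for all $k$ and $f(a_n,\sg_j)=0$; symmetry of $f$ then forces every diagonal value $f(a_n,a_n)$ to equal $t$. Finally, writing $\sg_i=a_0a_i-\frac12(a_0+a_i)$ and associating once more, $f(\sg_i,\sg_j)=f(a_0,a_i\sg_j)$, which (HW2) evaluates to $0$. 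Hence $f=t\,(\cdot,\cdot)$, which proves (a).

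\emph{Part (b).} The radical assertion is then immediate: by (a) the form $(\cdot,\cdot)=\lm\otimes\lm$ is, up to a scalar, the only Frobenius form, and its radical is $\{u:\lm(u)=0\}=J$. For the statement about ideals, let $I$ be any ideal of $HW$. Since $I$ is $\ad_{a_n}$-invariant for every axis $a_n$ and $\ad_{a_n}$ is semisimple, $I$ decomposes as the direct sum of its intersections with the eigenspaces $HW^{(n)}_\lm$ of $\ad_{a_n}$. Now $J=\ker\lm$ is an ideal of codimension $1$ meeting $HW^{(n)}_1=\langle a_n\rangle$ trivially, so $J=\bigoplus_{\lm\neq1}HW^{(n)}_\lm$; consequently $I\subseteq J$ if and only if $I\cap\langle a_n\rangle=0$, i.e.\ if and only if $a_n\notin I$. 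The crucial point is that the hypothesis $I\not\subseteq J$ mentions no particular axis, so if it holds then $a_n\in I$ for \emph{every} $n$; then each $\sg_j=a_0a_j-\frac12(a_0+a_j)$ also lies in $I$, whence $I=HW$. Thus every proper ideal is contained in $J$, and in particular $J$ is the unique maximal ideal, that is, the radical.

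The main obstacle is the coordination between the per-axis computations and the global conclusions. In (a) the orthogonality relations only constrain $f$ with one fixed axis in a slot, so the symmetry of the Frobenius form is exactly what is needed to glue the separate pieces into the uniform value $f(a_i,a_j)=t$; without it one obtains only a potentially axis-dependent constant. In (b) the decisive step is recognising that invariance together with semisimplicity relative to \emph{all} axes simultaneously upgrades the single condition $I\not\subseteq J$ into $a_n\in I$ for all $n$; verifying $J=\bigoplus_{\lm\neq1}HW^{(n)}_\lm$ for every $n$, and not merely for $a_0$, is the point requiring care, and is most cleanly obtained by transporting the decomposition for $a_0$ under the $D$-action that fixes $\lm$. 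In characteristic $3$ the eigenvalue $2$ coincides with $\frac12$, but $J$ is still the sum of the non-unit eigenspaces, so both arguments go through unchanged.
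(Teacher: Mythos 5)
Your proof is correct, but it takes a genuinely different route from the paper's. The paper gives essentially no computation: it observes that $(\cdot,\cdot)$ is the \emph{projection form} in the sense of \cite{KMS}, being a Frobenius form with $(a_i,a_i)=1$, checks that $(a_i,a_j)=1$ for all $i,j$, so that the projection graph of $HW$ is complete and in particular connected, and then invokes three general results of \cite{KMS}: connectivity yields uniqueness of the Frobenius form up to scalar (Proposition~4.19 there), containment of every proper ideal in the radical (Corollary~4.15), and identification of the radical of the algebra with the radical of the projection form, which is $J$ (Corollary~4.11). You instead reprove these facts for $HW$ from scratch: in (a), orthogonality of distinct $\ad_{a_n}$-eigenspaces under any associating form, the $2\times 2$ system coming from $u_j,v_j,w_j$ (whose determinant $32$ is invertible precisely because $\chr(\F)\neq 2$), symmetry plus transitivity of $D$ to glue the per-axis constants, and one further use of associativity to kill $f(\sg_i,\sg_j)$; in (b), the decomposition of an ideal under the semisimple operators $\ad_{a_n}$, the identity $J=\bigoplus_{\lm\neq 1}HW^{(n)}_\lm$ for every $n$, and the observation that $I\not\subseteq J$ then forces $a_n\in I$ for all $n$, hence $I=HW$. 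The paper's route buys brevity and a conceptual placement of $HW$ within the general structure theory, isolating connectivity of the projection graph as the only needed input; yours buys self-containment (no reliance on \cite{KMS}) and makes transparent that everything, including the characteristic-$3$ case, follows from the eigenspace decomposition already computed in the paper. One small point worth making explicit: you build symmetry into the definition of a Frobenius form, whereas the paper, following \cite{KMS}, requires only bilinearity and associativity; this costs nothing, because repeating your slot argument with the axis in the second position gives $f(a_k,a_n)=f(a_n,a_n)$ as well, so symmetry on axes (and hence on all of $HW$, since your remaining computations never use it) comes for free --- but a sentence to that effect is needed for your uniqueness claim to coincide exactly with the paper's statement.
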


\begin{proof}
We refer to \cite{KMS} for a detailed discussion of the radical of an axial algebra, projection 
form and projection graph.

The form $(\cdot,\cdot)$ that we introduced on $HW$ is the projection form, because it is a 
Frobenius form satisfying $(a_i,a_i)=1$ for all $i\in\Z$. The projection graph has all $a_i$ as 
vertices with an edge between $a_i$ and $a_j$, $i\neq j$, whenever $(a_i,a_j)\neq 0$. In fact, 
one can easily check that $(a_i,a_j)=1$ for all $i,j\in\Z$, so the projection graph of $HW$ is 
the complete graph. In particular, it is connected. It now follows from \cite[Proposition 
4.19]{KMS} that $HW$ has only one Frobenius form up to a scalar factor. The same connectivity 
property implies, by \cite[Corollary 4.15]{KMS}, that every proper ideal of $HW$ is contained 
in the radical of $HW$ (the largest ideal not containing axes). Finally, by \cite[Corollary 
4.11]{KMS}, the radical of $HW$ coincides with the radical of the projection form 
$(\cdot,\cdot)$, and that is $J$.
\end{proof}

Recall that we earlier introduced automorphisms $\rho$ and $\theta$ of the subalgebra 
$V=HW_u\oplus HW_v$. In the final result of the section we indicate the connection between these 
two involutions.

\begin{lemma}
The linear transformation $\psi:V\to V$ defined by:
$$c_j\mapsto\frac{1}{2}v_j=\frac{1}{2}c_j-2\sg_j\mbox{ and }
\sg_j\mapsto-\frac{1}{8}u_j=-\frac{3}{8}c_i-\frac{1}{2}\sg_j,$$
for $j\in\Z_+$, is an automorphism of $V$. Furthermore, $\rho^\psi=\theta$.
\end{lemma}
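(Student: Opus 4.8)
The plan is to prove the statement in three stages: first check that $\psi$ respects the algebra product of $V$, then confirm that $\psi$ is bijective, and finally verify the conjugation identity $\rho^\psi=\theta$.

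For the homomorphism property I would work in the basis $\{c_j,\sg_j\}$, in which $\psi$ is defined, and verify $\psi(xy)=\psi(x)\psi(y)$ on the three types of basis products recorded in Lemma \ref{products c sigma}, namely $c_ic_j=2\sg_{i,j}$, $c_i\sg_j=\frac38 c_{i,j}$, and $\sg_i\sg_j=-\frac38\sg_{i,j}$. The key simplification is that $c_{i,j}$ and $\sg_{i,j}$ are themselves linear combinations of the $c_k$ and $\sg_k$, so $\psi$ may be applied to them termwise; since $\psi(c_k)=\frac12 v_k$ and $\psi(\sg_k)=-\frac18 u_k$, the definitions of $u_{i,j}$ and $v_{i,j}$ give $\psi(c_{i,j})=\frac12 v_{i,j}$ and $\psi(\sg_{i,j})=-\frac18 u_{i,j}$ at once. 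On the other side, $\psi(c_i)\psi(c_j)$, $\psi(c_i)\psi(\sg_j)$, and $\psi(\sg_i)\psi(\sg_j)$ are scalar multiples of $v_iv_j$, $u_jv_i$, and $u_iu_j$, which Lemma \ref{products u v} evaluates as $-u_{i,j}$, $-3v_{i,j}$, and $3u_{i,j}$. Matching the two sides is then a short bookkeeping of scalars (for the first product, $\psi(c_ic_j)=2\psi(\sg_{i,j})=-\frac14 u_{i,j}$ against $\frac14 v_iv_j=-\frac14 u_{i,j}$; for the second, $\frac38\cdot\frac12 v_{i,j}$ against $-\frac1{16}(u_jv_i)=\frac3{16}v_{i,j}$; for the third, $-\frac38(-\frac18 u_{i,j})$ against $\frac1{64}(3u_{i,j})$), and all three identities check out.

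For invertibility I would simply compute $\psi^2$. Because $\psi$ visibly preserves each two-dimensional block $\langle c_j,\sg_j\rangle$, it suffices to square the matrix $\left(\begin{smallmatrix}\frac12&-\frac38\\-2&-\frac12\end{smallmatrix}\right)$ representing $\psi$ on such a block; this matrix squares to the identity, so $\psi^2=\mathrm{id}$ on all of $V$. Hence $\psi$ is an involution, in particular a bijection, and combined with the previous step it is an automorphism of $V$.

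Finally, since $\psi^2=\mathrm{id}$ we have $\psi^{-1}=\psi$, so $\rho^\psi=\psi\rho\psi$, and I would verify $\psi\rho\psi=\theta$ by evaluating both sides on $c_j$ and $\sg_j$, using $c_j=\frac14(u_j+v_j)$ and $\sg_j=\frac1{16}(u_j-3v_j)$ to read off the action of $\theta$. Tracking $c_j$ through $\psi$, then $\rho$ (which fixes $\sg_j$ and negates $c_j$), then $\psi$ again gives $\frac14 u_j-\frac14 v_j=\theta(c_j)$, and similarly $\sg_j\mapsto\frac1{16}u_j+\frac3{16}v_j=\theta(\sg_j)$. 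The computations are entirely routine; the only real obstacle is organisational — keeping the two bases and the three maps $\rho,\theta,\psi$ straight — and the structural observations that make it painless are that $\psi$ stabilises each block $\langle c_j,\sg_j\rangle$ and that every product in $V$ collapses to the uniform expressions $c_{i,j},\sg_{i,j},u_{i,j},v_{i,j}$.
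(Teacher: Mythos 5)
Your proof is correct and takes essentially the same route as the paper: the paper's own proof is a one-liner stating that the result \emph{follows from the formulae in Lemmas \ref{products c sigma} and \ref{products u v}}, plus the observation that the matrix of $\psi$ on each block $\langle c_j,\sg_j\rangle$ squares to the identity. Your three product verifications, the computation $\psi^2=\mathrm{id}$, and the explicit check of $\rho^\psi=\theta$ on $c_j$ and $\sg_j$ are exactly the details the paper leaves to the reader, and they all check out.
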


This follows from the formulae in Lemmas \ref{products c sigma} and \ref{products u v}. Note 
that $\psi$ is also an involution because the matrix 
$$
\left(\begin{array}{rr}
\frac{1}{2} & -2\\
-\frac{3}{8} & -\frac{1}{2}
\end{array}\right)
$$
squares to identity. This means that $\langle\rho,\psi\rangle\cong D_8$. Interestingly, the 
subalgebra $V$ appears to have symmetries independent of the entire algebra $HW$, because the 
automorphisms from $\langle\rho,\psi\rangle$ do not seem to extend to $HW$. 

\section{Discussion}

In this final section we pose some questions related to $HW$. Recall that it is known that every 
symmetric $2$-generated primitive algebra of Monster type $(\al,\bt)$ over a field of 
characteristic other that $2$ is at most $8$-dimensional unless $(\al,\bt)=(2,\frac{1}{2})$.

\begin{question} Is $HW$ the only infinite-dimensional primitive algebra of Monster type? Is it 
the only such algebra having dimension greater than $8$?
\end{question}

The answer to this question could depend on knowing all ideals of $HW$.

\begin{question}
Does $HW$ contain any nonzero ideals of finite dimension? Is it possible to classify all ideals 
in $HW$?
\end{question}

Note that $J$ is not the only proper nonzero ideal of $HW$. It can be shown that the ideal 
generated by all elements $\sg_j$ has codimension $1$ in $J$ (and hence codimension $2$ in 
$HW$). 

In addition to finding all ideals of $HW$, it is also interesting to find all of its subalgebras 
and their automorphisms.

\begin{question}
Which is the full automorphism group of $HW$ in characteristic $3$? Which is the automorphism 
group of $V$? Which automorphisms of $V$ extend to larger subalgebras of $HW$?
\end{question}

Finally, the fact that $HW$ is a baric algebra looks exciting. One can see easily that a baric 
algebra of Monster type satisfies the fusion law as in Table \ref{HWtable}, but with $2$ and 
$\frac{1}{2}$ substituted with arbitrary $\al$ and $\bt$.

\begin{question}
Are there examples of baric algebras of Monster type $(\al,\bt)$ for other values of $\al$ and 
$\bt$? Is it possible to classify all such algebras?
\end{question}

\end{document}